\documentclass[12pt,draftcls,onecolumn]{IEEEtran}

\usepackage[ansinew]{inputenc}
\usepackage{amsmath, amsfonts} 
\usepackage{amsthm, amssymb}
\usepackage[english]{babel}
\usepackage{graphicx}
\usepackage{subfigure}

\newtheorem{theorem}{Theorem}
\newtheorem{proposition}{Proposition}

\newtheorem{corollary}{Corollary}

\newtheorem{property}{Property}

\theoremstyle{definition}
\newtheorem{definition}{Definition}
\newtheorem{remark}{Remark}
\newtheorem{example}{Example}
\usepackage{verbatim}
\usepackage{array}
\usepackage{float}
\usepackage{subfigure}
\usepackage{color}

\newcommand{\blue}[1]{\textcolor{black}{#1}}

\begin{document}

\title{Global stability analysis using the eigenfunctions of the Koopman operator\footnote{Part of this work was funded by Army Research Office Grant W911NF-11-1-0511 and was performed while A. Mauroy was with the Department of Mechanical Engineering, University of California Santa Barbara. A. Mauroy is currently supported by a BELSPO (Belgian Science Policy) return grant.}}
\author{Alexandre Mauroy and Igor Mezi\'c}

\maketitle
\begin{abstract}

We propose a novel operator-theoretic framework to study global stability of nonlinear systems. Based on the spectral properties of the so-called Koopman operator, our approach can be regarded as a natural extension of classic linear stability analysis to nonlinear systems. The main results establish the (necessary and sufficient) relationship between the existence of specific eigenfunctions of the Koopman operator and the global stability property of \blue{hyperbolic} fixed points and limit cycles. These results are complemented with numerical methods which are used to estimate the region of attraction of the fixed point or to prove in a systematic way global stability of the attractor within a given region of the state space.

\end{abstract}
\thanks{A. Mauroy is with the Department of Electrical Engineering and Computer Science, University of Liège, Belgium. I. Mezi\'c is with the Department of Mechanical Engineering, University of California Santa Barbara, Santa Barbara, CA 93106, USA. E-mails: a.mauroy@ulg.ac.be; mezic@engineering.ucsb.edu.}\\

\section{Introduction}

\blue{
Dynamical systems are traditionally described with their trajectories evolving in a finite-dimensional state space. In contrast to this pointwise description, there exists an alternative operator-theoretic description: the systems can be represented by an operator acting on an infinite-dimensional space of functions. For instance, the so-called Koopman operator describes the evolution of observable functions along the trajectories (see e.g. \cite{Budisic_Koopman,Mezic_ann_rev} for a review). One iteration of the Koopman operator acting on one observable is equivalent to an iteration along all the trajectories of the system. Hence the operator-theoretic description provides a global insight into the system dynamics which is appropriate for global stability analysis.
}

\blue{The operator-theoretic approach to nonlinear systems is used implicitly in classical methods for global stability analysis. In particular, Lyapunov's second method does not rely on the pointwise description of the system but rather on the operator-theoretic framework.} A Lyapunov function is indeed a particular observable function that decreases through the action of the Koopman operator. In addition, the counterpart method based on densities---proposed only a few years ago in \cite{Rantzer_density}---is directly related to the dual operator (i.e. the Perron-Frobenius operator \cite{Lasota_book})---which is known for decades. In this context, it is surprising that explicit operator-theoretic techniques for stability analysis have only been developed recently \cite{Umesh_density,Umesh}. Equivalent optimization methods on occupation measures (i.e. in the dual Perron-Frobenius framework) for region of attraction estimation are also very recent \cite{Henrion2}.

The Koopman operator theory provides a well-developed framework for the study of dynamical systems. Even when associated with nonlinear dynamics, the Koopman operator is linear (but infinite dimensional) and thus amenable to a systematic spectral analysis which reveals the system behavior. More precisely, its spectral properties are strongly connected to the geometric properties of the system dynamics: the Koopman eigenfunctions capture periodic partitions in ergodic systems \cite{Mezic}, isochrons \cite{Mauroy_Mezic} and isostables \cite{MMM_isostables} in dissipative systems, and are related to the global linearization of the system \cite{Lan}.
Moreover, this spectral approach is conducive to data analysis and has for instance been used recently to predict instabilities in power grids \cite{Susuki}.
However, in spite of its success, a theoretical framework based on the spectral properties of the Koopman operator has received so far little attention in the context of global stability analysis.

Building on preliminary results presented in \cite{MauroyMezic_CDC}, this paper investigates the interplay between the global stability properties of a nonlinear system and the spectral properties of the associated Koopman operator. \blue{The proposed framework is very general, but we mainly focus on the global stability analysis of hyperbolic attractors, obtaining in this case necessary and sufficient conditions which rely on the existence of specific Koopman eigenfunctions.} These results mirror classic stability results of linear systems (or equivalently local stability results of nonlinear systems). While our previous work \cite{MauroyMezic_CDC} focuses on the case of a \blue{hyperbolic} fixed point, we also consider the case of a \blue{hyeprbolic} limit cycle, exploiting the fact that the proposed operator-theoretic approach is general and does not require specific assumptions on the nature of the attractor.

New numerical methods are also developed for the computation of the Koopman eigenfunctions. In contrast to existing methods (e.g. Fourier/Laplace averages \cite{MMM_isostables,Mezic,Mohr}, Arnoldi-type methods \cite{Rowley}), the numerical schemes proposed in this paper do not require the integration of (a finite number of) particular trajectories, so that they can be used as systematic methods for global stability analysis and estimation of the basin of attraction.  These numerical techniques are based on the decomposition of the eigenfunctions on a polynomial basis. Complementing a previous method using Taylor polynomials \cite{MauroyMezic_CDC}, we propose a novel method based on Bernstein polynomials which can be used when the eigenfunctions are not analytic.

The paper is organized as follows. In Section \ref{sec_Koopman_intro}, the operator-theoretic framework is introduced and general stability results using the Koopman operator are provided. Section \ref{sec_Eigenfunctions} investigates the relationship between the spectral properties of the Koopman operator and the global stability properties of the system. \blue{A general stability result is given and then applied to hyperbolic fixed points and limit cycles}. In Section \ref{sec_num_methods}, numerical methods using either Taylor polynomials or Bernstein polynomials are developed and illustrated with several examples. Concluding remarks are given in Section \ref{conclu}.

\section{Preliminary stability results with the Koopman operator}
\label{sec_Koopman_intro}

This section introduces the operator-theoretic framework and, as a preliminary to the main results, presents general stability results related to the asymptotic behavior of the Koopman operator. \blue{Most of the results and concepts can also be found in \cite{MauroyMezic_CDC}.}
 
\subsection{Operator-theoretic approach to dynamical systems}

We consider a flow $\varphi(t,x)$ on an arbitrary set $X$---i.e. $\varphi:\mathbb{R} \times X \rightarrow X$ satisfies the (semi)group properties $\varphi(0,x)=x$ and $\varphi(s,\varphi(t,x))=\varphi(s+t,x)$---and a (Banach) space $\mathcal{F}$ of \emph{observables} $f:X \rightarrow \mathbb{C}$. We assume that the observables are continuous, i.e. $\mathcal{F} \subseteq C^0(X)$. The so-called Koopman operator associated with $\varphi$ is defined on $\mathcal{F}$ as follows.
\begin{definition}[Koopman operator]
The Koopman (semi)group of operators $U^t:\mathcal{F} \rightarrow \mathcal{F}$ associated with the flow $\varphi$ is defined by
\begin{equation}
\label{Koopman_op}
U^t f = f \circ \varphi^t \qquad f\in \mathcal{F} \,.
\end{equation}
\end{definition}
No specific assumption on the flow $\varphi$ or on the set $X$ is needed to define the Koopman operator. For instance, $\varphi$ might be induced by a well-defined hybrid system. In the following, the set $X \subset \mathbb{R}^N$ is assumed to be compact and forward invariant under $\varphi^t(\cdot)\triangleq\varphi(t,\cdot)$ (i.e. $\varphi^t(X) \subseteq X$ $\forall t\geq 0$). In addition, we will consider that the flow is induced by 
the dynamical system
\begin{equation}
\label{syst}
\dot{x}=F(x) \,, \quad x\in  \mathbb{R}^N \,,
\end{equation}
i.e. $\varphi^t(x_0)$ is the solution of \eqref{syst} associated with the initial condition $x_0 \in \mathbb{R}^N$. If in addition $f$ and $F$ are continuously differentiable, $g(t,x)=U^t f(x)$ is the solution of the partial differential equation (see e.g. \cite{Lasota_book})
\begin{equation}
\label{PDE_Koopman}
\frac{\partial g}{\partial t}=F \cdot \nabla g \triangleq L_U g
\end{equation}
with the initial condition $g(0,x)=f(x)$. The symbol $\nabla$ denotes the gradient and $\cdot$ is the inner product in $\mathbb{R}^N$. The operator $L_U$ is the infinitesimal generator of $U^t$, i.e. $L_U f = \lim_{t\rightarrow 0} (U^t f-f)/t$. No boundary condition is added to \eqref{PDE_Koopman} since $X$ is forward invariant.

The (infinite-dimensional) Koopman operator representation \eqref{Koopman_op} is equivalent to the (finite-dimensional) system representation \eqref{syst}. But a remarkable fact is that $U^t (a f_1+ b f_2)=a U^t f_1 + b U^t f_2$, with $f_1,f_2 \in \mathcal{F}$ and $a,b \in \mathbb{R}$, so that the Koopman operator is \emph{linear} even when \eqref{syst} is nonlinear.\\

\paragraph*{Duality and Perron-Frobenius operator}
According to the theory of linear operators on Banach spaces, the Koopman operator $U^t$ has a dual operator $P^t$ which acts on the conjugate space $\mathcal{F}'$ of bounded linear functionals $\psi$ on $\mathcal{F}$, according to the relationship $P^t \psi=\psi \circ U^t$. Since $\mathcal{F}\subseteq C^0(X)$, each bounded linear functional $\psi \in\mathcal{F}'$ can be associated with a Radon measure and the operator $P^t$ equivalently acts on a space of measures. When the linear bounded functionals (or equivalently the measures) can be associated with a density $\rho \in \mathcal{F}^\dagger:X \rightarrow \mathbb{C}$ according to
\begin{equation*}
\psi(f)=\langle f, \rho \rangle_\mu = \int_X f \, \rho \, \mu(dx)  \qquad \forall f \in \mathcal{F}\,,
\end{equation*}
where $\mu$ on $X$ is a given finite measure, the dual operator $P^t$ can be redefined on $\mathcal{F}^\dagger$ and satisfies $\langle U^t f, \rho \rangle_\mu = \langle f, P^t \rho \rangle_\mu$. In this case, the dual operator $P^t$ is the so-called Perron-Frobenius operator describing the transport of densities along the trajectories of the flow $\varphi$.

If the flow is induced by \eqref{syst} and if $\rho$ and $F$ are continuously differentiable, the evolution of the density $g(t,x)=P^t \rho(x)$ satisfies the transport equation (see e.g. \cite{Lasota_book})
\begin{equation}
\label{PDE_PFO}
\frac{\partial g}{\partial t}=-\nabla \cdot (F g) \triangleq L_{P} g
\end{equation}
with the initial condition $g(0,x)=\rho(x)$. In contrast to the case of Koopman operator, proper boundary conditions must be considered with \eqref{PDE_PFO}.

\subsection{First stability results}
\label{sec_first_results}

The Koopman operator and the Perron-Frobenius operator provide two equivalent descriptions of the system (i.e. point-wise and set-wise descriptions) which can be related to well-known notions of stability theory. For the system \eqref{syst} admitting a globally stable fixed point $x^* \in X$, a \emph{Lyapunov function} $\mathcal{V}$ can be regarded as a nonnegative observable that decreases through the action of the Koopman operator, i.e. $L^t_U \mathcal{V}(x)<0$ for all $x \neq x^*$. Similarly, the more recent notion of \emph{Lyapunov density} introduced in \cite{Rantzer_density} is a function $C^1(X\setminus \{x^*\})$ that satisfies $\nabla \cdot (F \rho)>0$. This precisely corresponds to a density that decreases under the action of the Perron-Frobenius operator, i.e. $L^t_P \rho(x)<0$ for all $x \neq x^*$ \cite{Umesh_density,Umesh}.

The Perron-Frobenius approach---related to the Lyapunov density---is of particular interest to capture the weaker notion of almost everywhere stability, instead of the classical notion of stability. However, the spectral methods that we develop in the next section are more suited to the Koopman operator framework (see Remark \ref{rem_eigenfunction} below), which we will exclusively consider in the rest of the paper. The reader may wish to refer to \cite{Umesh_density,Umesh} for stability results obtained with the Perron-Frobenius operator.\\

\paragraph*{Definitions} At this point on we assume that the flow $\varphi$ admits an attractor $A$ with global stability properties on $X$.
\begin{definition}[Attractor]
\label{def_attractor}
The set $A\subset X$ is an attractor of the flow $\varphi^t$ if it satisfies all of the following properties:
\begin{enumerate}
\item $A$ is forward invariant under $\varphi^t(\cdot)$;
\item There exists a neighborhood $V \subset X$ of $A$ such that the limit set $\omega(x)$ of every $x\in V$ is in $A$, i.e.
\begin{equation*}
\omega(x) \triangleq \bigcap_{T \in \mathbb{R}} \overline{\{\varphi^t(x),t>T\}} \subseteq A  \qquad \forall x \in V\,,
\end{equation*}
where $\overline{\phantom{A}}$ denotes the closure of the set;
\item There exists no strictly smaller closed subset satisfying the above properties.
\end{enumerate}
\end{definition}
\blue{
\begin{definition}[Global stability]
\label{def_stability} An attractor (or a set) $A$ is
\begin{itemize}
\item stable if, for every $\epsilon>0$, there exists $\delta>0$ such that
\begin{equation*}
\min_{y\in A} \|x-y\|<\delta\,, x\in X \quad \Rightarrow \quad \min_{y\in A} \|\varphi^t(x)-y\|<\epsilon \quad \forall t \geq 0\,;
\end{equation*} 
\item globally attractive in $X$ if $\omega(x) \subseteq A$ for all $x\in X$;
\item globally asymptotically stable if it is stable and globally attractive.
\end{itemize}
\end{definition}
}

\paragraph*{Decomposition of the Koopman operator} As a preliminary to the main results of this section, we introduce the subspace $\mathcal{F}_{A_c} \subseteq \mathcal{F}$ of functions with support on $A_c=X \setminus A$, i.e.
\begin{equation*}
\mathcal{F}_{A_c}=\{f \in \mathcal{F} | f(x)=0 \, \forall x\in A\}\,.
\end{equation*}
If $x\in A$, one has $\varphi^t(x)\in A$ for all $t \in \mathbb{R}$, so that $U^t f(x) = f \circ \varphi^t(x)=0$ for $f\in\mathcal{F}_{A_c}$ and $x\in A$. It follows that $\mathcal{F}_{A_c}$ is invariant under $U^t$, and we denote by $U_{A_c}^t$ the restriction of $U^t$ to $\mathcal{F}_{A_c}$. Similarly, we consider the set $\mathcal{F}|_A$ of observables restricted to $A$, i.e. $\mathcal{F}|_A=\{f|_A:A\rightarrow \mathbb{C}| f\in \mathcal{F}\}$. Note that the extension 
$\bar{f}$ of $f \in \mathcal{F}|_A$ to $X$, such that $\bar{f}=f$ on $A$ and $\bar{f}=0$ on $A_c$, is generally not in $\mathcal{F} \subseteq C^0(X)$. The Koopman operator $U_{A}^t:\mathcal{F}|_A \rightarrow \mathcal{F}|_A$ is associated with the flow on $A$, i.e. $\varphi|_A:\mathbb{R}\times A \rightarrow A$, and is rigorously defined by $U_A^t(f|_A)=f|_A \circ \varphi^t|_A=(U^t f)|_A$.\\

\paragraph*{General stability results} Stability properties of the attractor are captured by the restriction $U_{A_c}^t$ of the Koopman operator. The result is summarized in the following proposition \blue{(see also \cite{MauroyMezic_CDC})}.
\begin{proposition}
\label{prop_Koopman_stab}
The attractor $A$ of \eqref{syst} is globally \blue{attractive} in $X$ if and only if
\begin{equation}
\label{stab_cond_U}
\lim_{t \rightarrow \infty} U_{A_c}^t f = 0 \quad \forall f\in \mathcal{F}_{A_c}\,,
\end{equation}
with $\mathcal{F} = C^0(X)$.
\end{proposition}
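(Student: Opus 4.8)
The plan is to read the limit in \eqref{stab_cond_U} as convergence in the sup-norm of $\mathcal{F}=C^{0}(X)$, i.e. $\|U^{t}f\|_{\infty}=\sup_{x\in X}|f(\varphi^{t}(x))|\to 0$, and to prove the two implications separately. A single test observable does most of the work: the distance-to-$A$ function $\eta(x)\triangleq\min_{y\in A}\|x-y\|$, which is continuous on the compact set $X$, vanishes exactly on the (closed, compact) attractor $A$, and therefore belongs to $\mathcal{F}_{A_{c}}$. Throughout I use that $A$ is compact and that, by Definition \ref{def_attractor} together with the ambient global stability assumption, $A$ is a \emph{stable} attractor in the sense of Definition \ref{def_stability}.

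For the \emph{if} direction I argue by contraposition. Suppose $A$ is not globally attractive, so there is $x_{0}\in X$ whose $\omega$-limit set contains some $z\notin A$; pick $t_{n}\to\infty$ with $\varphi^{t_{n}}(x_{0})\to z$. Since $\eta(z)>0$ and $\eta$ is continuous, $U^{t_{n}}\eta(x_{0})=\eta(\varphi^{t_{n}}(x_{0}))\to\eta(z)>0$, whence $\|U^{t_{n}}\eta\|_{\infty}\ge\eta(\varphi^{t_{n}}(x_{0}))\not\to 0$. This contradicts \eqref{stab_cond_U} applied to $\eta\in\mathcal{F}_{A_{c}}$, so the convergence condition forces global attractivity. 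This direction uses nothing beyond continuity of $\eta$ and compactness.

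For the \emph{only if} direction I first reduce the claim to a purely geometric statement, \emph{uniform attraction}: $\sup_{x\in X}\eta(\varphi^{t}(x))\to 0$ as $t\to\infty$. Granting this, fix any $f\in\mathcal{F}_{A_{c}}$. Being continuous on the compact $X$, $f$ is uniformly continuous and vanishes on $A$; hence for every $\varepsilon>0$ there is $\delta>0$ with $|f(w)|<\varepsilon$ whenever $\eta(w)<\delta$ (such a $w$ lies within $\delta$ of a point of $A$ where $f=0$). Choosing $T$ so that $\eta(\varphi^{t}(x))<\delta$ for all $x\in X$ and $t\ge T$ then gives $\|U^{t}f\|_{\infty}\le\varepsilon$, i.e. \eqref{stab_cond_U}. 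The elementary input here is that $\omega(x)\subseteq A$ already yields \emph{pointwise} attraction $\eta(\varphi^{t}(x))\to 0$ for each fixed $x$: otherwise compactness of $X$ would extract a limit point of the orbit at positive distance from $A$, contradicting $\omega(x)\subseteq A$.

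The crux, and the step I expect to be the main obstacle, is upgrading pointwise to \emph{uniform} attraction; pointwise convergence of $\eta(\varphi^{t}(x))$ does not by itself control the supremum over $x$. My plan is an absorbing-neighborhood argument. Fix $\varepsilon>0$ and set $W=\{x\in X:\eta(x)<\varepsilon\}$. Stability of $A$ provides an open neighborhood $W'\supseteq A$ with $\varphi^{t}(W')\subseteq W$ for all $t\ge 0$. By pointwise attraction every orbit eventually enters $W'$, so the open sets $G_{T}\triangleq\bigcup_{0\le s\le T}\varphi^{-s}(W')$ cover $X$; since they increase with $T$, compactness gives a single $T^{\ast}$ with $X=G_{T^{\ast}}$. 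Thus every $x$ reaches $W'$ at some $s_{x}\le T^{\ast}$, and forward invariance of $W'$ into $W$ yields $\eta(\varphi^{t}(x))<\varepsilon$ for all $t\ge T^{\ast}$, uniformly in $x$. I want to stress where stability is essential: without a forward-invariant absorbing neighborhood $W'$ the argument collapses, and uniform attraction can genuinely fail for merely attracting (non-stable) attractors—e.g. a semistable fixed point on a circle is globally attractive yet the flow is a surjective homeomorphism, so $\|U^{t}f\|_{\infty}=\|f\|_{\infty}$ for all $t$. This is why the ambient stability hypothesis on $A$, rather than attractivity alone, must underlie the \emph{only if} implication.
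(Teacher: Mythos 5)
Your proof is correct, but it proves a stronger statement than the paper does, and the difference is worth spelling out. The paper reads the limit in \eqref{stab_cond_U} \emph{pointwise}: its sufficiency argument is exactly your distance-function step, and its necessity argument is simply that $f$ is continuous, vanishes on $A$, and $\omega(x)\subseteq A$ forces $f(\varphi^t(x))\to 0$ for each fixed $x$ (the paper writes this loosely as $f\circ\lim_{t\to\infty}\varphi^t(x)$, a step your subsequence-and-compactness phrasing of pointwise attraction actually repairs). You instead read the limit in the sup-norm of $C^0(X)$ and prove \emph{uniform} convergence, which is what costs you the absorbing-neighborhood argument and, crucially, the Lyapunov stability of $A$ --- a hypothesis the paper's pointwise proof never touches. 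Your semistable-point example correctly shows that under the uniform reading the necessity direction genuinely fails for a merely attractive attractor, so the proposition as a clean equivalence with global attractivity alone survives only under the pointwise reading; under your reading it becomes an equivalence with global asymptotic stability (uniform attraction on a compact $X$ in fact implies stability). Both arguments are sound on their own terms: the paper's buys the stated iff with attractivity and a two-line necessity proof; yours buys the stronger, and arguably more natural, mode of convergence for a limit taken in the Banach space $\mathcal{F}=C^0(X)$, at the price of an extra hypothesis that the paper's surrounding text only informally supplies.
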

\begin{proof}
\emph{Sufficiency.} Consider the distance function 
\begin{equation*}
d(x)=\min_{y\in A} \|x-y\|\,, \quad x\in X\,.
\end{equation*}
The function is continuous and has zero value on $A$. Then, it follows from \eqref{stab_cond_U} that $\lim_{t \rightarrow \infty} d(\varphi^t(x))=0$. This implies that the limit set $\omega(x)\subseteq A$ for all $x\in X$, so that $A$ is globally \blue{attractive}.\\
\emph{Necessity.} For $f\in\mathcal{F}_{A_c}$, we have
\begin{equation*}
\lim_{t\rightarrow \infty} U_{A_c}^t f (x)= \lim_{t\rightarrow \infty} f \circ \varphi^t(x) = f \circ \lim_{t\rightarrow \infty} \varphi^t(x)\,,
\end{equation*}
since $f$ is continuous. Global \blue{attractivity} implies that $\omega(x) \subseteq A$ for all $x\in X$ and therefore $\lim_{t\rightarrow \infty} U_{A_c}^t f (x)=0$ for all $x\in X$.
\end{proof}

The result of Proposition \ref{prop_Koopman_stab} is very general, since it makes no assumption on the type of attractor $A$, which can even be a set of several attractors. In addition, the result holds not only for flows induced by the dynamics \eqref{syst} but also for any well-defined flow. An equivalent result can also be obtained for exponential stability (see \cite{MauroyMezic_CDC}).

\section{Koopman eigenfunctions and global stability}
\label{sec_Eigenfunctions}

The results presented in Section \ref{sec_first_results} involve all the observables of the chosen functional space and are therefore difficult to use in practice. In this section, we show that only a few particular functions---i.e. the eigenfunctions of the Koopman operator---are sufficient to capture the stability properties of the system.

\subsection{Properties of the Koopman eigenfunctions}

\blue{We define the eigenfunctions of the Koopman operator and summarize their main properties (see also \cite{Mezic,Mezic_ann_rev,MauroyMezic_CDC}).}

\begin{definition}[Koopman eigenfunction]
An eigenfunction of the Koopman operator (or in short, a Koopman eigenfunction) is an observable $\phi_\lambda\in\mathcal{F} \neq 0$ that satisfies
\begin{equation}
\label{eig_evol}
U^t \phi_\lambda = e^{\lambda t} \phi_\lambda
\end{equation}
for some $\lambda\in \mathbb{C}$. The value $\lambda$ is the associated Koopman eigenvalue and belongs to the point spectrum of the operator.
\end{definition}
If $F\in C^1(X)$, it follows from \eqref{PDE_Koopman} that the Koopman eigenfunctions satisfy the eigenvalue equation
\begin{equation}
\label{eig_equa}
L_U \phi_\lambda = F \cdot \nabla \phi_\lambda = \lambda \phi_\lambda \,.
\end{equation}
\begin{remark}
\label{rem_eigenfunction}
Koopman eigenfunctions are smooth in the vicinity of the attractor, a property which contrasts with the case of the dual Perron-Frobenius operator. Indeed, the eigenfunctions of the Perron-Frobenius operator are Dirac functions (or $n$th derivative of Dirac functions) with support on the attractor and it can be shown that each of these only captures local information of the dynamics. In the context of global stability analysis, it is therefore much more appropriate to consider the eigenfunctions of the Koopman operator.
\end{remark}

Koopman eigenfunctions and eigenvalues are characterized by the following property.
\begin{property}
\label{property_eig}
Suppose that $\phi_{\lambda_1}$ and $\phi_{\lambda_2}$ are two Koopman eigenfunctions associated with the eigenvalues $\lambda_1$ and $\lambda_2$. If $\phi_{\lambda_1}^{k_1} \, \phi_{\lambda_2}^{k_2} \in \mathcal{F}$, with $k_1,k_2 \in \mathbb{R}$, then it is an eigenfunction associated with the eigenvalue $k_1 \lambda_1+k_2 \lambda_2$.\\
\end{property}
\begin{proof}
It follows from \eqref{eig_evol} that
\begin{equation*}
U^t(\phi_{\lambda_1}^{k_1}\, \phi_{\lambda_2}^{k_2})= U^t \phi_{\lambda_1}^{k_1} \, U^t \phi_{\lambda_2}^{k_2} = (U^t \phi_{\lambda_1})^{k_1} \, (U^t \phi_{\lambda_2})^{k_2} = e^{(k_1 \lambda_1+k_2 \lambda_2) t} \phi_{\lambda_1}^{k_1} \, \phi_{\lambda_2}^{k_2} \,.
\end{equation*}
\end{proof}
Property \ref{property_eig} implies that, as soon as there is $1$, there is an infinity of Koopman eigenfunctions (however, they could be dependent). Also, it follows from Property \ref{property_eig} that the products $\phi_{\lambda_i}^{k_i}\phi_{\lambda_j}^{k_j}$, with $k_i=\lambda_j$ and $k_j=-\lambda_i$, satisfy \eqref{eig_evol} with $\lambda=0$. (They are eigenfunctions only if they belong to $\mathcal{F}$.) These functions are constant along the trajectories, so that their level sets are invariant under $\varphi$. If one considers the non-degenerate intersections of the level sets of $N-1$ such (independent) functions, we obtain a family of one-dimensional sets that correspond to the orbits of the system. This property shows that the Koopman eigenfunctions are directly related to the dynamics of the systems. More precisely, knowing them is equivalent to knowing the trajectories of the system.

The following property shows that the set of eigenfunctions can be decomposed in two subsets. Only one of these subsets is related to the stability properties of the system.

\begin{property}
\label{property_eig2}
Suppose that $A$ is globally stable on $X$. If $\mathcal{F} \subseteq C^0(X)$, the Koopman eigenfunctions $\phi_\lambda$ and their associated eigenvalues $\lambda$ satisfy
\begin{eqnarray}
\phi_\lambda \in \mathcal{F}_{A_c} & \Leftrightarrow & \Re\{ \lambda\}<0 \,, \label{real_eig_neg} \\
\phi_\lambda \notin \mathcal{F}_{A_c} & \Leftrightarrow & \Re\{ \lambda\}=0 \,. \label{real_eig_zero}
\end{eqnarray}
\end{property}
Moreover, if $\phi_\lambda \in \mathcal{F}_{A_c}$, then it is also an eigenfunction of the restriction $U^t_{A_c}$, associated with the same eigenvalue. If $\phi_{\lambda} \notin \mathcal{F}_{A_c}$, then the restriction $\phi_\lambda|_A$ of $\phi_{\lambda}$ to $A$ is an eigenfunction of $U_{A}^t$, associated with the same eigenvalue.
\begin{proof}
1. $\phi_\lambda \in \mathcal{F}_{A_c} \Rightarrow \Re\{ \lambda\}<0$. The property directly follows from Proposition \ref{prop_Koopman_stab}.\\
2. $\Re\{ \lambda\}<0 \Rightarrow \phi_\lambda \in \mathcal{F}_{A_c}$. Consider a point $x_\omega \in A$. There exists a state $x\in X$ such that $x_\omega \in \omega(x)$, or equivalently there exists a sequence $t_k$ such that $t_k \rightarrow \infty$ and $\varphi^{t_k}(x)\rightarrow x_\omega$ as $k \rightarrow \infty$. Then, the continuity of $\phi_\lambda$ and \eqref{eig_evol} imply that
\begin{equation}
\label{limit_phi}
\phi_\lambda(x_\omega)= \phi_\lambda\left(\lim_{n\rightarrow \infty} \varphi^{t_k}(x)\right)= \lim_{k\rightarrow \infty} \phi_\lambda(\varphi^{t_k}(x))=0\,.
\end{equation}
3. $\phi_\lambda \notin \mathcal{F}_{A_c} \Leftrightarrow \Re\{ \lambda\}=0$. Suppose that there exists an eigenfunction which satisfies $\Re\{\lambda\}>0$. According to \eqref{eig_evol}, we have
\begin{equation*}
\phi_\lambda \left(\lim_{t\rightarrow \infty} \varphi^{t}(x)\right)= \lim_{t\rightarrow \infty} e^{\lambda t} \phi_\lambda(x) =\infty
\end{equation*}
for some $x$ such that $\phi_\lambda(x) \neq 0$. The eigenfunction $\phi_\lambda$ is not bounded on $A$, which contradicts the continuity assumption. It follows that the eigenvalues always satisfy $\Re\{\lambda\}\leq 0$, so that \eqref{real_eig_zero} is equivalent to \eqref{real_eig_neg}.\\
4. The fact that $\phi_\lambda \in \mathcal{F}_{A_c}$ is an eigenfunction of $U^t_{A_c}$ is trivial since $\mathcal{F}_{A^c}$ is invariant under $U^t$. The fact that $\phi_\lambda|_A$ is an eigenfunction of $U^t_A$ follows from $U^t_A(\phi_\lambda|_A)=(U^t \phi_\lambda)|_A=e^{\lambda t} \phi_\lambda|_A$.
\end{proof}

The eigenfunctions $\phi_\lambda$ that do not belong to $\mathcal{F}_{A_c}$ are associated with purely imaginary eigenvalues and provide no information on stability. Instead, they are related to the dynamics on the attractor $A$. More precisely, their restrictions $\phi_\lambda|_A \in L^2(A)$ are the eigenfunctions of $U_A$, which is a unitary operator describing the ergodic behavior of the trajectories on $A$. In addition, the level sets of $\phi_\lambda$ are the sets of initial conditions \blue{converging to the same trajectory} on the attractor. They are closely related to the notion of periodic invariant sets \cite{Mezic} and to the \emph{isochrons} defining phase coordinates on the state space \cite{Mauroy_Mezic}.

In contrast, the eigenfunctions $\phi_\lambda$ that belong to $\mathcal{F}_{A_c}$ are associated with eigenvalues $\Re\{\lambda\}\neq 0$ and capture the stability properties of the system. They are also the eigenfunctions of the restriction $U_{A_c}$, a property which is in agreement with the results of Section \ref{sec_first_results} showing that $U_{A_c}$ plays a key role for stability analysis. The level sets of $|\phi_\lambda|$ are related to the notion of \emph{isostables}, i.e. the sets of initial conditions that converge synchronously toward the attractor \cite{MMM_isostables}.

\begin{remark}[Space of observables]
It has been shown recently in \cite{Mohr} that an appropriate space of observables---for which the Koopman operator \blue{admits a spectral expansion} (in the case of stable fixed points and limit cycles)---is \blue{(the completion of) a space of polynomials with indeterminates being coordinates corresponding to stable directions of the attractor and with coefficients being observables defined on the attractor.} For the sake of simplicity (and for practical reasons), we consider in this paper more general spaces (e.g. $C^0(X)$, $C^1(X)$) in which the operator might not \blue{admit a spectral expansion}, but which still ensure that the eigenfunctions capture the required stability properties of the system. In addition, the results of \cite{Mohr} motivate the choice of a polynomial basis for the numerical simulations proposed in Section \ref{sec_num_methods}.
\end{remark}

\begin{remark}[Continuous and regular spectrum]
For our purpose, we only need to consider the point spectrum of the Koopman operator. In well-chosen spaces of observables, the continuous and residual parts of the spectrum are empty with most of the types of \blue{hyperbolic} attractors (fixed point, limit cycle, quasiperiodic tori, see e.g. \cite{Gaspard,Gaspard2,MMM_isostables,Mohr}). For chaotic systems, these parts correspond to the ergodic dynamics on the strange attractor, therefore carrying no information on stability. \blue{When the attractor is not hyperbolic, a non empty continuous spectrum is also observed with the space of analytic functions (see e.g. \cite{Gaspard}). In this case, the associated (non-analytic) generalized eigenfunctions can be used for stability analysis (see also Remark \ref{rem_non_hyperb}).}
\end{remark}

\subsection{Main results}
\label{subsec_main_results}

As suggested by Property \ref{property_eig2}, the eigenfunctions lying in $\mathcal{F}_{A_c}$ (i.e., associated with $\Re\{\lambda\}<0$) can be used for the global stability analysis of the attractor. We have the following general result.
\begin{theorem}
\label{theo_stab_eig}
Suppose that $X$ is a forward invariant compact set and that the Koopman operator $U^t f = f \circ \varphi^t$ admits an eigenfunction $\phi_{\lambda} \in  C^0(X)$ with the eigenvalue $\Re\{\lambda\}<0$. Then the zero level set
\begin{equation*}
M_0=\{x \in X|\phi_{\lambda}(x)=0\}
\end{equation*}
is forward invariant under $\varphi^t$ and globally asymptotically stable.
\end{theorem}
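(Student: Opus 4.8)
The plan is to exploit the defining identity $\phi_\lambda(\varphi^t(x)) = e^{\lambda t}\phi_\lambda(x)$, which follows from the eigenvalue equation \eqref{eig_evol}, together with the sign condition $\Re\{\lambda\}<0$ and the compactness of $X$. Forward invariance of $M_0$ is immediate: if $\phi_\lambda(x)=0$, then $\phi_\lambda(\varphi^t(x))=e^{\lambda t}\phi_\lambda(x)=0$ for all $t\ge 0$, so $\varphi^t(x)\in M_0$. I also note that $M_0=\phi_\lambda^{-1}(0)$ is closed as the preimage of a closed set under the continuous map $\phi_\lambda$.

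Next I would establish global attractivity. Taking moduli gives $|\phi_\lambda(\varphi^t(x))| = e^{\Re\{\lambda\}t}|\phi_\lambda(x)|$, which tends to $0$ as $t\to\infty$ for every $x\in X$ since $\Re\{\lambda\}<0$. If $y\in\omega(x)$, I pick a sequence $t_k\to\infty$ with $\varphi^{t_k}(x)\to y$; continuity of $\phi_\lambda$ then forces $\phi_\lambda(y)=\lim_k \phi_\lambda(\varphi^{t_k}(x))=0$, so $y\in M_0$. Hence $\omega(x)\subseteq M_0$ for all $x\in X$. This is essentially the argument used in the proof of Property \ref{property_eig2}, now applied to the level set $M_0$ rather than to $A$.

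For stability I would use that $|\phi_\lambda|$ is nonincreasing along forward trajectories, so each sublevel set $S_c=\{x\in X:|\phi_\lambda(x)|<c\}$ is forward invariant. The task is to sandwich such a sublevel set between two metric neighborhoods of $M_0$. Given $\epsilon>0$, the set $K_\epsilon=\{x\in X:\min_{y\in M_0}\|x-y\|\ge\epsilon\}$ is compact and disjoint from $M_0$, so $m_\epsilon=\min_{x\in K_\epsilon}|\phi_\lambda(x)|>0$; consequently $S_{m_\epsilon}$ lies inside the $\epsilon$-neighborhood of $M_0$. A compactness-and-continuity argument then produces $\delta>0$ with $\{x\in X:\min_{y\in M_0}\|x-y\|<\delta\}\subseteq S_{m_\epsilon}$: otherwise a sequence $x_n\to x^\ast\in M_0$ with $|\phi_\lambda(x_n)|\ge m_\epsilon$ would contradict $\phi_\lambda(x^\ast)=0$. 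Combining these, a point within $\delta$ of $M_0$ enters the forward-invariant set $S_{m_\epsilon}$ and therefore stays within $\epsilon$ of $M_0$ for all $t\ge0$, which is the required stability estimate.

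I expect the main obstacle to be precisely this last step, namely translating the analytic decay of $|\phi_\lambda|$ into a geometric statement about the distance to $M_0$, since the modulus $|\phi_\lambda|$ and the distance function are only loosely comparable in general. Compactness of $X$ is exactly what makes the two comparable, through the strictly positive minimum $m_\epsilon$ on $K_\epsilon$ at one end and the uniform continuity of $\phi_\lambda$ near $M_0$ at the other, so I would be careful to invoke it at both ends of the sandwich rather than assume any Lipschitz-type bound relating $|\phi_\lambda|$ to the distance to $M_0$.
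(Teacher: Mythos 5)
Your proposal is correct and follows essentially the same route as the paper's proof: forward invariance of the sublevel sets of $|\phi_\lambda|$, a sandwich of such a sublevel set between two metric neighborhoods of $M_0$ for stability, and continuity of $\phi_\lambda$ plus the exponential decay $|\phi_\lambda(\varphi^t(x))|=e^{\Re\{\lambda\}t}|\phi_\lambda(x)|$ for global attractivity. The only difference is that you spell out, via compactness of $K_\epsilon$ and a sequence argument, the existence of the constants $\alpha$ and $\delta$ that the paper's proof asserts directly from continuity.
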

\begin{proof}
\blue{
\emph{Invariance.} We define the set
\begin{equation*}
\Lambda_\alpha=\{x \in X \, | \, |\phi_\lambda(x)|<\alpha\} 
\end{equation*}
for some $\alpha>0$. For $x \in \Lambda_\alpha$, we have $|\phi_\lambda(\varphi^t(x))|=e^{\Re\{\lambda\} t} |\phi_\lambda(x)| < \alpha$ for all $t>0$, so that $\phi^t(x) \in \Lambda_\alpha$ for all $t \geq 0$. In particular, $M_0 = \Lambda_{\alpha=0}$ is forward invariant. \\
\emph{Stability.} For some $\epsilon>0$, consider the set
\begin{equation*}
\mathcal{N}_\epsilon = \{x \in X | \min_{y \in M_0} \|x-y\| < \epsilon \}\,.
\end{equation*}
By continuity of $\phi_\lambda$ and since $\phi_\lambda(x)=0$ for all $x \in M_0$, there exist $\alpha>0$ and $\delta>0$ such that $\mathcal{N}_\delta \subset \Lambda_\alpha \subset \mathcal{N}_\epsilon$. Since $\Lambda_\alpha$ is forward invariant, we have
\begin{equation*}
x \in \mathcal{N}_\delta \Rightarrow x \in \Lambda_\alpha \Rightarrow \varphi^t(x) \in \Lambda_\alpha \Rightarrow \varphi^t(x) \in \mathcal{N}_\epsilon \quad \forall t \geq 0
\end{equation*}
and $M_0$ is stable.\\
}
\emph{Global attractivity.} Since $\Re\{\lambda\}<0$, the equality \eqref{eig_evol} implies that $\lim_{t \rightarrow \infty} \phi_{\lambda}(\varphi^t(x))= 0$ $\forall x$. 
Since $\phi_{\lambda}$ is continuous, \eqref{limit_phi} holds so that the limit set of every trajectory is contained in $M_0$. This concludes the proof.
\end{proof}
The proof of the stability property is inspired from the proof of the Krasovskii-LaSalle principle (see e.g. \cite{Khalil}). The main difference is that the differentiability of the eigenfunctions is not required here since we know \emph{a priori} that these eigenfunctions are decreasing (and asymptotically converge to zero) along the trajectories. Also, the result holds with $X=\mathbb{R}^N$ provided that $\lim_{\| x \| \rightarrow \infty} |\phi_\lambda(x)| \neq 0$.

Theorem \ref{theo_stab_eig} implies the following corollary.
\begin{corollary}
\label{corol_stab_eig}
Suppose that $X$ is a forward invariant compact set and that the Koopman operator $U^t f = f \circ \varphi^t$ admits the eigenfunctions $\phi_{\lambda_i} \in  C^0(X)$ with the eigenvalues $\Re\{\lambda_i\}<0$, $i=1,\dots,m$. Then the intersection of the zero level sets
\begin{equation*}
M=\bigcap_{i=1}^m \{ x \in X | \phi_{\lambda_i}(x)=0 \}
\end{equation*}
is invariant under $\varphi^t$ and globally asymptotically stable.
\end{corollary}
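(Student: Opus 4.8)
The plan is to reduce the corollary to the single-eigenfunction statement of Theorem \ref{theo_stab_eig}, applied in turn to each $\phi_{\lambda_i}$, and then to reassemble the three ingredients of global asymptotic stability (invariance, global attractivity, stability) on the intersection. Writing $M_0^{(i)}=\{x\in X \mid \phi_{\lambda_i}(x)=0\}$, Theorem \ref{theo_stab_eig} already guarantees that each $M_0^{(i)}$ is forward invariant and globally asymptotically stable, and by definition $M=\bigcap_{i=1}^m M_0^{(i)}$. \emph{Invariance} of $M$ is then immediate: if $x\in M$ then $x\in M_0^{(i)}$ for every $i$, and forward invariance of each $M_0^{(i)}$ gives $\varphi^t(x)\in M_0^{(i)}$ for all $i$ and all $t\ge 0$, hence $\varphi^t(x)\in M$. \emph{Global attractivity} follows just as directly from the attractivity clause of Theorem \ref{theo_stab_eig}: for each fixed $i$, $\Re\{\lambda_i\}<0$ together with \eqref{eig_evol} yields $\lim_{t\to\infty}\phi_{\lambda_i}(\varphi^t(x))=0$, so continuity (via \eqref{limit_phi}) forces $\omega(x)\subseteq M_0^{(i)}$; since this holds for every $i$, we conclude $\omega(x)\subseteq\bigcap_{i=1}^m M_0^{(i)}=M$ for all $x\in X$.

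The substantive part is \emph{stability}, because the intersection of stable sets is not automatically stable and so cannot simply be quoted from Theorem \ref{theo_stab_eig}. Here I would repeat the Lyapunov-type argument of that theorem using the aggregate function $\Phi(x)=\max_{1\le i\le m}|\phi_{\lambda_i}(x)|$, which is continuous, nonnegative, and vanishes exactly on $M$. The key monotonicity is inherited from the individual eigenfunctions: since $\Re\{\lambda_i\}<0$ gives $e^{\Re\{\lambda_i\}t}\le 1$ for $t\ge 0$, each $|\phi_{\lambda_i}|$ is nonincreasing along the flow, hence so is $\Phi$, and the sublevel sets $\Lambda_\alpha=\{x\mid \Phi(x)<\alpha\}=\bigcap_{i=1}^m\{x\mid |\phi_{\lambda_i}(x)|<\alpha\}$ are forward invariant. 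Given $\epsilon>0$, uniform continuity of $\Phi$ on the compact set $X$, combined with $\Phi|_M=0$, supplies $\delta>0$ with $\mathcal{N}_\delta\subset\Lambda_\alpha$ for any prescribed $\alpha$, and forward invariance of $\Lambda_\alpha$ then chains $\mathcal{N}_\delta\subset\Lambda_\alpha\subset\mathcal{N}_\epsilon$ into stability exactly as before.

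I expect the main obstacle to be the reverse inclusion $\Lambda_\alpha\subset\mathcal{N}_\epsilon$ for small $\alpha$, i.e. showing that smallness of $\Phi$ actually forces proximity to $M$ rather than merely to one of the individual zero sets. This is precisely where compactness of $X$ must be invoked: on the compact set $X\setminus\mathcal{N}_\epsilon$ no point lies in $M$, so at each such point at least one $\phi_{\lambda_i}$ is nonzero and $\Phi$ is therefore strictly positive; by compactness $\Phi$ attains a minimum $\alpha_0>0$ on $X\setminus\mathcal{N}_\epsilon$, and choosing $\alpha<\alpha_0$ guarantees $\Lambda_\alpha\subset\mathcal{N}_\epsilon$. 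Assembling invariance, global attractivity, and this stability argument establishes that $M$ is invariant and globally asymptotically stable in the sense of Definition \ref{def_stability}.
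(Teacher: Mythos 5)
Your proof is correct. The invariance and global-attractivity parts coincide with the paper's: both simply intersect the conclusions of Theorem \ref{theo_stab_eig} applied to each $\phi_{\lambda_i}$. For stability, however, you take a genuinely different route. The paper reuses the stability of each individual zero level set $M_0^i$ (already established in Theorem \ref{theo_stab_eig}) and then combines them through the implication \eqref{relat_stability}, namely that proximity to every $M_0^i$ forces proximity to $\bigcap_i M_0^i$; this implication is asserted as ``clear'' but in fact requires a compactness argument of exactly the kind you supply (it fails for general closed sets in noncompact spaces). You instead re-run the sublevel-set argument of Theorem \ref{theo_stab_eig} directly on the aggregate function $\Phi=\max_i|\phi_{\lambda_i}|$, whose sublevel sets $\Lambda_\alpha$ are forward invariant and vanish exactly on $M$, and you prove the crucial inclusion $\Lambda_\alpha\subset\mathcal{N}_\epsilon$ by noting that $\Phi$ attains a strictly positive minimum on the compact set $X\setminus\mathcal{N}_\epsilon$. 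The paper's approach is more modular; yours is more self-contained and makes explicit the compactness step the paper leaves implicit. Both are valid proofs of the corollary.
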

\blue{
\begin{proof}
\emph{Invariance and global attractivity.} Let $M_0^i$ denote the zero level set of $\phi_{\lambda_i}$. Theorem \ref{theo_stab_eig} implies that $M_0^i$ is forward invariant and contains the limit sets $\omega(x)$ for all $x\in X$. Then, $M=\cap_i M_0^i$ is also forward invariant and contains the limit sets $\omega(x)$ for all $x\in X$.\\
\emph{Stability.} For some $\epsilon>0$, it is clear that there exists $\epsilon' \leq \epsilon$ such that
\begin{equation}
\label{relat_stability}
\min_{y \in M_0^i} \|z-y\|<\epsilon'\,\, \forall i\in\{1,\dots,m\} \Rightarrow \min_{y \in \cap_i M_0^i} \|z-y\|<\epsilon\,.
\end{equation}
Moreover, Theorem \ref{theo_stab_eig} implies that, for some $\epsilon'>0$, there exists $\delta>0$ such that
\begin{equation}
\label{relat_stability2}
\min_{y \in \cap_i M_0^i} \|x-y\|<\delta \Rightarrow \min_{y \in M_0^i} \|x-y\|<\delta \Rightarrow \min_{y \in M_0^i} \|\varphi^t(x)-y\|<\epsilon' \,\, \forall t \geq 0 
\end{equation}
for $i=1,\dots,m$. The proof follows from \eqref{relat_stability} with $z=\varphi^t(x)$ and \eqref{relat_stability2}.
\end{proof}
}
According to Proposition \ref{property_eig2}, the eigenfunctions considered in Corollary \ref{corol_stab_eig} are zero on the attractor and it follows that $A \subseteq M$. In order to prove the stability of $A$, several eigenfunctions are required so that the intersections of their zero level sets satisfy $M=A$ (typically $N-q$ eigenfunctions if the attractor is of dimension $q$). It is noticeable that, despite the fact that the Koopman operator is infinite-dimensional, only a (small) finite number of eigenfunctions is sufficient to establish global stability of the attractor.

\begin{remark}[Stable and unstable manifolds]
If $A$ is an invariant manifold but not an attractor, it is obviously not possible to obtain $M=A$, \blue{where $M$ is the intersection of the zero level sets of all the eigenfunctions that correspond to an eigenvalue with a strictly negative real part}. In this case, the smallest set $M$ is the unstable manifold of $A$, and the intersection $M_s$ of the zero level sets of the eigenfunctions $\phi_{\lambda_i}$, with $\Re\{\lambda_i\}>0$, is the stable manifold of $A$. For the Koopman operator acting on the space of functions $f|_{M_s}$ restricted to $M_s$, one can find an intersection of zero level sets of eigenfunctions $\phi_{\lambda_i}|_{M_s}$, with $\Re\{\lambda_i\}<0$, that is equal to $A$. In this case, Corollary \ref{corol_stab_eig} implies that $A$ is globally stable in $M_s$ (i.e. under the flow $\varphi^t|_{M_s}$).
\end{remark}

\blue{
\begin{remark}[Compact forward invariant set]
The result of Corollary \ref{corol_stab_eig} can also be used when $X$ is not compact forward invariant (or when this cannot be proved). If the (compact) set
\begin{equation*}
X_0 = \bigcap_{i=1}^m \{ x \in X | |\phi_{\lambda_i}(x)| < \alpha_i \}
\end{equation*}
satisfies $X_0 \cap \partial X = \emptyset$ (with appropriate values $\alpha_i$) where $\partial X$ is the boundary of $X$, then it is a (compact) forward invariant set. In this case, Corollary \ref{corol_stab_eig} implies global stability in $X_0$. Note that compact forward invariant sets are also given by the level sets of Lyapunov functions derived from the Koopman eigenfunctions (see Section \ref{subsub_fxpt}).
\end{remark}
}

In the following, the general result of Theorem \ref{theo_stab_eig}---and in particular Corollary \ref{corol_stab_eig}---is applied to study the stability of particular attractors, such as \blue{hyperbolic} fixed points and limit cycles. For these cases, we show that the local stability property of the attractors is extended to a global stability property through the Koopman eigenfunctions.\\

\subsubsection{The case of a \blue{hyperbolic} fixed point}
\label{subsub_fxpt}

When the attractor is a fixed point, the point spectrum of the Koopman operator captures the eigenvalues of the Jacobian matrix $J$ evaluated at the fixed point. The corresponding eigenfunctions are used with Corollary \ref{corol_stab_eig} and lead to a necessary and sufficient criterion for global stability.
\begin{proposition}
\label{prop_stab_fxpt}
Let $X\subset \mathbb{R}^N$ be a connected, forward invariant, compact set. Consider that \eqref{syst} with $F\in C^2(X)$ admits a fixed point $x^*\in X$ and assume that the Jacobian matrix $J$ of $F$ at $x^*$ has $N$ \blue{distinct} eigenvalues with strictly negative real part. Then, the fixed point $x^*$ is globally stable in $X$ if and only if the Koopman operator associated with \eqref{syst} has $N$ eigenfunctions $\phi_{\lambda_i}\in C^1(X)$ \blue{(with distinct eigenvalues $\lambda_i$)}, with $\Re\{\lambda_i \}<0$ and $\nabla \phi_{\lambda_i}(x^*)\neq 0$. In addition, the eigenvalues $\lambda_i$ are the eigenvalues of $J$.
\end{proposition}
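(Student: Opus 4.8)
\section*{Proof proposal}

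The plan is to prove the two implications separately, both hinging on a single local computation: differentiating the eigenvalue equation \eqref{eig_equa} at the fixed point. Writing $F \cdot \nabla \phi_\lambda = \lambda \phi_\lambda$ componentwise and differentiating with respect to $x_j$ at $x^*$, every term containing a factor $F_k(x^*)=0$ drops out and, using that $\phi_\lambda(x^*)=0$ (this follows since $x^*$ is fixed and $\Re\{\lambda\}<0$ force $(e^{\lambda t}-1)\phi_\lambda(x^*)=0$), one is left with $J^\top \nabla \phi_\lambda(x^*) = \lambda\,\nabla \phi_\lambda(x^*)$. Hence, whenever $\nabla\phi_\lambda(x^*)\neq 0$, the value $\lambda$ is an eigenvalue of $J^\top$ — equivalently of $J$ — with $\nabla\phi_\lambda(x^*)$ a corresponding eigenvector. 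This yields the ``in addition'' claim at once, and, because the $\lambda_i$ are distinct, shows that the $N$ gradients $\nabla\phi_{\lambda_i}(x^*)$ are eigenvectors of $J^\top$ for distinct eigenvalues, hence a basis of $\mathbb{C}^N$.

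For sufficiency I would argue as follows. The basis property makes the real-linear map $v\mapsto(\nabla\phi_{\lambda_i}(x^*)\cdot v)_i$ injective, so $\Phi=(\phi_{\lambda_1},\dots,\phi_{\lambda_N})$ is a local embedding near $x^*$ and $x^*$ is an isolated point of $M=\bigcap_i\{\phi_{\lambda_i}=0\}$. Corollary \ref{corol_stab_eig} already gives that $M$ is forward invariant and globally asymptotically stable, so it only remains to upgrade ``$x^*$ isolated in $M$'' to ``$M=\{x^*\}$''. Since $J$ is Hurwitz, $x^*$ is automatically locally asymptotically stable (so stability is free) with an open basin $\mathcal{B}$, and one checks that $X\setminus\mathcal{B}$ is forward invariant. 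Any point of $M$ lying in $\mathcal{B}$ must equal $x^*$: its forward orbit stays in $M$ and converges to the isolated point $x^*$, hence eventually enters a neighborhood meeting $M$ only in $x^*$, which would force the orbit to reach $x^*$ in finite time — impossible for a flow unless the point already is $x^*$. If $X\setminus\mathcal{B}$ were nonempty, a point $p$ in it would satisfy $\omega(p)\subseteq M\cap(X\setminus\mathcal{B})=M\setminus\{x^*\}$; connectedness of $X$ together with the openness and forward invariance of $\mathcal{B}$ rules this out, giving $\mathcal{B}=X$ and $M=\{x^*\}$, so $x^*$ is globally asymptotically stable.

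For necessity I would construct the eigenfunctions from the linearization. Let $w_i$ be the left eigenvectors of $J$ (so $J^\top w_i=\lambda_i w_i$), set $\ell_i(x)=w_i^\top(x-x^*)$, and define
\[
\phi_{\lambda_i}(x)=\lim_{t\to\infty} e^{-\lambda_i t}\,\ell_i(\varphi^t(x)).
\]
A shift of the time variable shows directly that $\phi_{\lambda_i}\circ\varphi^\tau=e^{\lambda_i\tau}\phi_{\lambda_i}$, so each limit (where it exists) is a Koopman eigenfunction with eigenvalue $\lambda_i$; and since the linearized flow governs the dynamics near $x^*$, one gets $\phi_{\lambda_i}(x)=w_i^\top(x-x^*)+O(\|x-x^*\|^2)$, hence $\nabla\phi_{\lambda_i}(x^*)=w_i\neq 0$ with distinct $\lambda_i$ equal to the eigenvalues of $J$. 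Existence of the limit, its continuity, and in particular $C^1$ regularity on all of $X$ are exactly the smoothness properties of the principal eigenfunctions of a globally stable hyperbolic fixed point, which I would borrow from the linearization and spectral results of \cite{Lan,Mohr}.

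The main obstacle is twofold. In sufficiency the delicate step is the global conclusion $M=\{x^*\}$: the gradient computation is purely local and only isolates $x^*$ within $M$, so excluding additional components of the common zero set genuinely requires the connectedness of $X$ together with the local asymptotic stability provided by $J$ being Hurwitz. In necessity the delicate step is establishing convergence and $C^1$ regularity of the Laplace-type averages $\phi_{\lambda_i}$ on the whole of $X$ — where resonances among the stable eigenvalues and the behaviour far from $x^*$ must be controlled — which is where the cited results do the heavy lifting.
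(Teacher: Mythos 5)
Your local computation at $x^*$ (yielding $J^T\nabla\phi_{\lambda_i}(x^*)=\lambda_i\nabla\phi_{\lambda_i}(x^*)$, hence the ``in addition'' claim and the local isolation of $x^*$ in $M$) matches the paper, and your observation that a point of $M$ inside the basin $\mathcal{B}$ whose forward orbit stays in $M$ and converges to the isolated point $x^*$ must already equal $x^*$ is a correct way to get $M\cap\mathcal{B}=\{x^*\}$. The gap is the final step of sufficiency: you dismiss the case $X\setminus\mathcal{B}\neq\emptyset$ by asserting that ``connectedness of $X$ together with the openness and forward invariance of $\mathcal{B}$ rules this out,'' but it does not. $\mathcal{B}$ is open and forward invariant but not closed in $X$, so connectedness gives nothing; a priori $M$ could contain a second compact forward-invariant piece inside $X\setminus\mathcal{B}$ (all $N$ eigenfunctions vanishing on some other invariant set), and nothing in your argument excludes this. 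This is exactly where the paper does real work: since $X$ is connected, the boundary $\partial\Omega$ of the basin meets $X$; it is forward invariant, so it contains a limit point $x_\omega$ at which $\phi_{\lambda_i}(x_\omega)=0$ by continuity; but points $x_\epsilon$ arbitrarily close to $x_\omega$ inside the basin satisfy $|\phi_{\lambda_i}(x_\epsilon)|=C\exp(-\Re\{\lambda_i\}T)$ with $T\to\infty$, so at least one eigenfunction is unbounded near $x_\omega$, contradicting $\phi_{\lambda_i}\in C^0(X)$. Some argument of this kind, exploiting the continuity of the eigenfunctions on all of $X$ together with the backward-time scaling from \eqref{eig_evol}, is indispensable; topology alone cannot deliver $M=\{x^*\}$.

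On necessity you take a genuinely different route: Laplace-type averages $\phi_{\lambda_i}(x)=\lim_{t\to\infty}e^{-\lambda_i t}\,w_i^\top(\varphi^t(x)-x^*)$, rather than the paper's pullback $\phi_{\lambda_i}=\tilde\phi_{\lambda_i}\circ h$ through the global $C^1$ linearizing diffeomorphism supplied by Theorem 2.3 of \cite{Lan}. The averaging construction works for the eigenvalue closest to the imaginary axis, but for the faster eigenvalues the correction term $e^{-\lambda_i t}\,O(\|\varphi^t(x)-x^*\|^2)$ behaves like $e^{(2\mu-\Re\{\lambda_i\})t}$ with $\mu=\max_j\Re\{\lambda_j\}$ and need not vanish, so convergence and $C^1$ regularity on all of $X$ are not automatic and cannot simply be ``borrowed''; the conjugacy-based construction in the paper sidesteps this entirely and directly yields all $N$ eigenfunctions with $\nabla\phi_{\lambda_i}(x^*)=w_i\neq 0$.
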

\begin{proof}
\emph{Sufficiency.} Consider the first order Taylor approximations
\begin{equation*}
\phi_{\lambda_i}(x) = \phi_{\lambda_i}(x^*) + \nabla \phi_{\lambda_i}(x^*) (x-x^*) + o(\|x-x^*\|)
\end{equation*}
and
\begin{equation*}
F(x) = J (x-x^*) + o(\|x-x^*\|) \,.
\end{equation*}
(Note that $\phi_{\lambda_i}(x^*)=0$, according to Property \ref{property_eig2}.) Injecting these two approximations into \eqref{eig_equa} \blue{and considering $x-x^*=\|x-x^*\| e_X$, where $e_X$ is the unit vector in the direction of $x-x^*$, we obtain to first order (i.e. by taking $\|x-x^*| \rightarrow 0$)
\begin{equation*}
(J e_X) \cdot \nabla \phi_{\lambda_i}(x^*) = \lambda_i \nabla \phi_{\lambda_i}(x^*) e_X\,.
\end{equation*}
This equation is valid for every $e_X$, i.e. every unit vector, so that we get}
\begin{equation}
\label{gradient_eig}
J^T \nabla \phi_{\lambda_i}(x^*) = \lambda_i \nabla \phi_{\lambda_i}(x^*) \,.
\end{equation}
Since $\nabla \phi_{\lambda_i}(x^*)\neq 0$, the Koopman eigenvalue is an eigenvalue of the Jacobian matrix $J$. Moreover, $\nabla \phi_{\lambda_i}(x^*)$ is equal (up to a multiplicative constant) to the left eigenvector $w_i$ of $J$, or equivalently $\Re\{\nabla \phi_{\lambda_i}(x^*)\}$ and $\Im\{\nabla \phi_{\lambda_i}(x^*)\}$ (when $\lambda_i$ is complex) are respectively parallel to $\Re\{w_i\}$ and $\Im\{w_i\}$. \blue{Thus the zero level sets of $\Re\{\phi_{\lambda_i}\}$ and $\Im\{\phi_{\lambda_i}\}$ are tangent at $x^*$ to a hyperplane whose normal is $\Re\{w_i\}$ and $\Im\{w_i\}$, respectively. Since the $N$ eigenvectors $\Re\{w_i\}$ and $\Im\{w_i\}$ (here we use a shorthand, the imaginary
part could be zero) are independent, the intersection in a small neighborhood $V$ of $x^*$ of the zero level sets of the $N$ different functions $\Re\{\phi_{\lambda_i}\}$ and $\Im\{\phi_{\lambda_i}\}$ can only be $x^*$ (to prove by contradiction, just take a sequence of points $\{x_i\}$ in a nested sequence of neighborhoods $V_i$ such
that $\Re\{\phi_{\lambda_i}(x_i)\}=0$ and $\Im\{\phi_{\lambda_i}(x_i)\}=0$, for every $i$. By taking the
limit of along that sequence, it is easy to see that gradients must be dependent.)}

Next, we show that the zero level sets cannot have another intersection in $X$. Suppose that there is another intersection. This must be a part of another invariant set which is not connected to the fixed point. Therefore, the boundary $\partial \Omega$ of the basin of attraction $\Omega$ of $x^*$ has a non empty intersection with $X$, since $X$ is a connected set. Moreover, since $X$ is forward invariant, $\partial \Omega \cap X$ is also forward invariant and contains the limit sets of its trajectories. Consider a point $x_\omega \in \partial \Omega \cap X$ that belongs to a limit set. By definition and continuity of the eigenfunctions, we have $\phi_{\lambda_i}(x_\omega)=0$ $\forall i$ (see \eqref{limit_phi}). Also, for any arbitrarily small neighborhood $V_\epsilon$ of $x_\omega$ and for all $x_\epsilon \in V_\epsilon \cap \Omega$, there exist a point $x_V \in V$ with $x_V\neq x^*$ and a constant $T>0$ such that $\varphi^{-T}(x_V)=x_\epsilon$. There is at least one eigenfunction that satisfies $|\phi_{\lambda_i}(x_V)|=C>0$,
since $x^*$ is the only intersection in $V$ of the zero level sets of the $N$ eigenfunctions. Equivalently we have $|\phi_{\lambda_i}(x_\epsilon)|=C \exp(-\Re\{\lambda_i\} T)>0$. Therefore, $\phi_{\lambda_i}$ is not continuous in $V_\epsilon \subset X$, which is a contradiction.

Finally, since $\Re\{\lambda_i\}<0$, the result follows from Corollary \ref{corol_stab_eig} with $M=\{x^*\}$.

\emph{Necessity.} (The proof is inspired from results presented in \cite{Lan}.) Since the fixed point is globally stable in $X$, it follows from Theorem 2.3 in \cite{Lan} that there exists a $C^1$ diffeomorphism $y=h(x)$ such that $\dot{y}=J \, y$, $h(x^*)=0$, and the Jacobian matrix of $h$ at $0$ satisfies $J_h=I$. For this linear system, there exist $N$ distinct Koopman eigenfunctions $\tilde{\phi}_{\lambda_i}(y)=y \cdot w_i$ that are associated with the eigenvalues of $J$ (see e.g. \cite{MMM_isostables,Mezic_ann_rev}). It follows that the Koopman operator of \eqref{syst} has $N$ $C^1$ eigenfunctions of the form $\phi_{\lambda_i}=\tilde{\phi}_{\lambda_i} \circ h$ (with the same eigenvalues). Moreover, we have $\nabla \phi_{\lambda_i}(x^*)=J_h^T \, w_i=w_i \neq 0$. This concludes the proof.
\end{proof}
Proposition \ref{prop_stab_fxpt} can be interpreted as the global equivalent of the well-known local stability result for a fixed point. While local stability depends on $N$ eigenvalues of the Jacobian matrix, global stability depends on $N$ $C^1$ Koopman eigenfunctions (associated with the same eigenvalues). The support of these particular eigenfunctions corresponds to the basin of attraction of the fixed point. Note also that the condition $\nabla \phi_\lambda(x^*) \neq 0$ is necessary to rule out the Koopman eigenfunctions of the form $\phi_{\lambda_1}^{k_1}\cdots \phi_{\lambda_N}^{k_N}$ (see Property \ref{property_eig}), which are redundant with respect to the basic eigenfunctions $\phi_{\lambda_i}$.

\begin{remark}[Linear systems] In the case of a linear system $\dot{x}=A x$, the \blue{eigenfunctions} are given by $\phi_{\lambda_i}(x)=x \cdot w_i$ where $w_i$ is a left eigenvector of $A$. They satisfy $\phi_{\lambda_i} \in C^1(\mathbb{R}^N)$ and $\nabla \phi_{\lambda_i}(x^*) = w_i \neq 0$, so that Proposition \ref{prop_stab_fxpt} only requires $\Re\{\lambda_i\}<0$. We recover the usual stability criterion for (global) stability of linear systems.
\end{remark}

\blue{
\begin{remark}[Non hyperbolic case]
\label{rem_non_hyperb}
The result of Proposition \ref{prop_stab_fxpt} cannot be used when the fixed point is not hyperbolic (i.e. the Jacobian matrix $J$ has at least one eigenvalue $\Re\{\lambda_i\}=0$). In this case, the Koopman operator does not possess $N$ eigenfunctions that satisfy the assumptions of Proposition \ref{prop_stab_fxpt} (i.e. $\Re\{\lambda \}<0$ and $\nabla \phi_{\lambda}(x^*)\neq 0$). However, the operator has a continuous spectrum with eigenvalues satisfying $\Re\{\lambda\}<0$ (see \cite{Gaspard}), so that Theorem \ref{theo_stab_eig} can be used. For instance, the system $\dot{x}=-x^3$ admits a continuous (generalized) eigenfunction $\phi_\lambda(x)=\exp(-1/(2x^2))$ with the associated eigenvalue $\lambda=-1$, and Theorem \ref{theo_stab_eig} implies global stability of the (non hyperbolic) fixed point at the origin. Note that $d^n\phi_\lambda/dx^n(0)=0$ for all $n\in \mathbb{N}$ so that the eigenfunction is not analytic.
\end{remark}
}

\blue{For the sufficiency part of Proposition \ref{prop_stab_fxpt}, the assumptions on the eigenvalues of the Jacobian matrix are actually not required. We have the following Corollary.
\begin{corollary}
\label{corol_stab_fxpt}
Let $X\subset \mathbb{R}^N$ be a connected, forward invariant, compact set and consider that \eqref{syst} admits a fixed point $x^*\in X$. If there exist $N$ Koopman eigenfunctions $\phi_{\lambda} \in C^1(X)$, with $\Re\{\lambda\}<0$ and such that the $N$ vectors $\nabla \phi_\lambda(x^*) \neq 0$ are linearly independent, then $x^*$ is globally asymptotically stable in $X$.
\end{corollary}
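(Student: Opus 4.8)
The plan is to mirror the sufficiency part of the proof of Proposition \ref{prop_stab_fxpt}, simply bypassing the steps that relied on the hypotheses on the Jacobian $J$. In that proof those hypotheses (distinct eigenvalues of $J$, all with negative real part) served only to derive, via the first-order expansion of the eigenvalue equation \eqref{eig_equa}, the identity \eqref{gradient_eig} and hence to conclude that the gradients $\nabla\phi_{\lambda_i}(x^*)$ are independent left eigenvectors of $J$. Here the linear independence of the $N$ gradients is assumed outright, so the whole first-order computation involving $F$ can be dropped (in particular no smoothness of $F$ is needed), and everything downstream carries over. I would first record that each eigenfunction vanishes at $x^*$: since $\varphi^t(x^*)=x^*$, equation \eqref{eig_evol} gives $\phi_{\lambda_i}(x^*)=e^{\lambda_i t}\phi_{\lambda_i}(x^*)$ for all $t$, and $\Re\{\lambda_i\}<0$ forces $\phi_{\lambda_i}(x^*)=0$. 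Thus $x^*\in M$, where $M=\bigcap_{i=1}^N\{x\in X\mid\phi_{\lambda_i}(x)=0\}$. By Corollary \ref{corol_stab_eig}, $M$ is forward invariant and globally asymptotically stable, so it suffices to prove $M=\{x^*\}$.

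Next I would establish the local statement $M\cap V=\{x^*\}$ on a suitable neighborhood $V$ of $x^*$. Consider the $C^1$ map $\Phi=(\phi_{\lambda_1},\dots,\phi_{\lambda_N}):X\to\mathbb{C}^N$. Viewing $\mathbb{C}^N\cong\mathbb{R}^{2N}$, its real differential at $x^*$ sends $h\in\mathbb{R}^N$ to $(\nabla\phi_{\lambda_1}(x^*)\cdot h,\dots,\nabla\phi_{\lambda_N}(x^*)\cdot h)$; the assumed linear independence of the gradients makes this map injective, so $\Phi$ is an immersion at $x^*$ and therefore locally injective. Since $\Phi(x^*)=0$, this yields $\Phi^{-1}(0)\cap V=M\cap V=\{x^*\}$ for small enough $V$. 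The same expansion shows that $\mathcal{V}=\sum_i|\phi_{\lambda_i}|^2$ is positive definite on $V$ relative to $x^*$, with $\dot{\mathcal{V}}=\sum_i 2\Re\{\lambda_i\}|\phi_{\lambda_i}|^2<0$ on $V\setminus\{x^*\}$; hence $x^*$ is locally asymptotically stable and its basin of attraction $\Omega$ is open in $X$.

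The main work, and the step I expect to be the chief obstacle, is the global argument ruling out any point of $M$ other than $x^*$; this is exactly the connectedness/continuity reasoning of Proposition \ref{prop_stab_fxpt}, which genuinely requires connectedness of $X$ (without it, $M$ could be an entire unstable manifold). I would argue by contradiction: if $M\neq\{x^*\}$, then since $M$ is forward invariant and $M\cap V=\{x^*\}$, no point $y\in M\setminus\{x^*\}$ can lie in $\Omega$, for otherwise its forward orbit would enter $M\cap V=\{x^*\}$ in finite time, violating uniqueness of solutions. Hence $\Omega\neq X$, and since $\Omega$ is open, nonempty and proper while $X$ is connected, $\partial\Omega\cap X\neq\emptyset$. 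This set is compact and forward invariant, so it contains a limit point $x_\omega$ of some orbit, and then \eqref{limit_phi} gives $\phi_{\lambda_i}(x_\omega)=0$ for all $i$.

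To close the contradiction I would approach $x_\omega$ by points $x_\epsilon\in\Omega$ and flow each one forward until it first meets a fixed small sphere $\partial B(x^*,r)\subset V$ at a point $x_V\neq x^*$ after time $T$. Using $\phi_{\lambda_i}(x_\epsilon)=e^{-\lambda_i T}\phi_{\lambda_i}(x_V)$ together with a uniform lower bound $\max_i|\phi_{\lambda_i}|\geq c_0>0$ on the compact sphere (valid because $M\cap\partial B(x^*,r)=\emptyset$) and $\Re\{\lambda_i\}<0$, I get $\max_i|\phi_{\lambda_i}(x_\epsilon)|\geq c_0$. A pigeonhole over the finitely many indices then produces a fixed $i^*$ with $|\phi_{\lambda_{i^*}}(x_\epsilon)|\geq c_0$ along a sequence $x_\epsilon\to x_\omega$, contradicting continuity of $\phi_{\lambda_{i^*}}$ at $x_\omega$, where it vanishes. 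This forces $M=\{x^*\}$, and the conclusion follows from Corollary \ref{corol_stab_eig}. The delicate points requiring care are the \emph{uniform} (not merely pointwise) lower bound on the sphere and the forward invariance of $\partial\Omega\cap X$.
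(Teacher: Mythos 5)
Your proof is correct and is essentially the paper's own argument: the paper disposes of this corollary in one line by invoking Corollary \ref{corol_stab_eig} together with the sufficiency part of the proof of Proposition \ref{prop_stab_fxpt}, which is exactly the local-injectivity-plus-connectedness reasoning you reconstruct (with the Taylor/Jacobian step rightly dropped since independence of the gradients is now hypothesized directly). Your write-up is if anything slightly more careful than the paper's at the two points you flag---the uniform lower bound on the compact sphere and the pigeonhole producing a fixed index $i^*$---but the route is the same.
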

\begin{proof}
The result follows directly from Corollary \ref{corol_stab_eig} and from the proof of Proposition \ref{prop_stab_fxpt}.
\end{proof}
}

The Koopman eigenfunctions considered in Proposition \ref{prop_stab_fxpt} \blue{and Corollary \ref{corol_stab_fxpt}} yield the Lyapunov functions \cite{MauroyMezic_CDC,MMM_isostables}
\begin{equation}
\label{Lyap_fct}
\mathcal{V}(x)=\left( \sum_{i=1}^N |\phi_{\lambda_i}(x)|^p\right)^{1/p}
\end{equation}
with the integer $p\geq 1$. According to \eqref{eig_evol}, these Lyapunov functions satisfy $\mathcal{V}(\varphi^t(x)) \leq \exp(\Re\{\lambda_1\}t) \mathcal{V}(x)$ for $x\in X$, where $\lambda_1$ is the eigenvalue closest to the imaginary axis. (Note that the Lyapunov functions are not necessarily smooth.) \blue{This result can be used to define a (compact) set
\begin{equation*}
\Omega_\alpha = \{ x \in X | V(x) < \alpha \}
\end{equation*}
which is forward invariant if $\Omega_\alpha \cap \partial X = \emptyset$, where $\partial X$ is the boundary of $X$. This is useful when it cannot be shown that $X$ is forward invariant. In numerical simulations, the Lyapunov function \eqref{Lyap_fct} can also be used} to estimate the basin of attraction or to check the accuracy of the computations.

In addition, it is shown in \cite{MauroyMezic_CDC,MMM_isostables} that the eigenfunctions are related to (non-quadratic) metrics that are exponentially contracting on $X$. These metrics could be considered through the differential framework recently developed in \cite{Forni}.

\subsubsection{The case of a \blue{hyperbolic} limit cycle}

When the attractor is a limit cycle, the point spectrum of the Koopman operator captures the Floquet exponents. The corresponding eigenfunctions are used with Corollary \ref{corol_stab_eig} and lead to a necessary and sufficient criterion for global stability. Note that the point spectrum also contains imaginary eigenvalues of the form $\lambda=i k 2\pi/T$, with $k\in \mathbb{Z}$ and where $T$ is the period of the limit cycle, but the corresponding eigenfunctions are not related to the stability of the system.

\begin{proposition}
\label{prop_stab_limcyc}
Let $X\subset \mathbb{R}^N$ be a connected, forward invariant, compact set. Consider that \eqref{syst} with $F\in C^2(X)$ admits a limit cycle $\Gamma \subset X$ and assume that the monodromy matrix evaluated at some $x^\gamma\in \Gamma$ has $N-1$ \blue{distinct} eigenvalues (Floquet exponents) with strictly negative real part associated with (Floquet) eigenvectors $v_i$. Then, the limit cycle $\Gamma$ is globally stable in $X$ if and only if the Koopman operator associated with \eqref{syst} has $N-1$ eigenfunctions $\phi_{\lambda_i}\in C^1(X)$ \blue{(with distinct eigenvalues $\lambda_i$)} with $\Re\{\lambda_i\}<0$, and such that $\nabla \phi_{\lambda_i}$ is differentiable along the limit cycle and satisfies $\nabla \phi_{\lambda_i}(x^\gamma) \cdot v_i \neq 0$. In addition, the eigenvalues $\lambda_i$ are the Floquet exponents of the limit cycle.
\end{proposition}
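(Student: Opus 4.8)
The plan is to mirror the structure of the proof of Proposition~\ref{prop_stab_fxpt}, replacing the Jacobian $J$ and its left eigenvectors by the monodromy matrix $M$ and its Floquet structure along $\Gamma$. As there, the statement splits into a sufficiency part (the eigenfunctions force $\Gamma$ to be their common zero set, and Corollary~\ref{corol_stab_eig} then yields global stability) and a necessity part (global stability together with hyperbolicity lets one construct the eigenfunctions through a linearizing conjugacy). By Property~\ref{property_eig2} every $\phi_{\lambda_i}$ with $\Re\{\lambda_i\}<0$ already vanishes on $\Gamma$, so the whole issue is to show that the zero level sets meet exactly along $\Gamma$ and nowhere else in $X$.

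For sufficiency I would first derive the limit-cycle analogue of \eqref{gradient_eig}. Writing $x^\gamma(t)=\varphi^t(x^\gamma)$ for the periodic orbit and $\psi_i(t)=\nabla\phi_{\lambda_i}(x^\gamma(t))$, differentiating the eigenvalue equation \eqref{eig_equa} along the cycle (using $F\in C^2$ and the assumed differentiability of $\nabla\phi_{\lambda_i}$ along $\Gamma$) gives the adjoint variational equation
\begin{equation*}
\dot{\psi}_i = \left(\lambda_i I - J^T(t)\right)\psi_i\,, \qquad J(t)=DF(x^\gamma(t))\,.
\end{equation*}
Since $\nabla\phi_{\lambda_i}$ is a genuine function and $x^\gamma$ is $T$-periodic, $\psi_i$ is $T$-periodic; integrating the fundamental solution over one period forces $\psi_i(0)=\nabla\phi_{\lambda_i}(x^\gamma)$ to be a left eigenvector of the monodromy matrix $M$ with eigenvalue $e^{\lambda_i T}$. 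Hence $\lambda_i$ is a Floquet exponent (establishing the last sentence of the statement), and because the trivial multiplier $1$ belongs to the tangent direction $F(x^\gamma)$, each $\psi_i(0)$ is orthogonal to $F(x^\gamma)$, i.e. transverse to $\Gamma$. The distinctness of the exponents makes the $N-1$ vectors $\nabla\phi_{\lambda_i}(x^\gamma)$ independent, and the condition $\nabla\phi_{\lambda_i}(x^\gamma)\cdot v_i\neq 0$ pins each gradient to the left eigenvector dual to $v_i$.

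With this in hand the local argument proceeds as in Proposition~\ref{prop_stab_fxpt}: the map $x\mapsto(\phi_{\lambda_1}(x),\dots,\phi_{\lambda_{N-1}}(x))$ has differential of rank $N-1$ at every point of $\Gamma$, so by the implicit function theorem its zero set is, in a neighbourhood of $\Gamma$, a one-dimensional manifold; since it contains $\Gamma$ and its tangent space is spanned by $F$ there, it coincides with $\Gamma$ locally. I would then exclude further intersections in $X$ by the same contradiction used for the fixed point: any extra common zero would lie on another invariant set, forcing $\partial\Omega\cap X\neq\emptyset$ for the basin $\Omega$ of $\Gamma$; picking a point of a limit set on this boundary and pulling back a point where some $|\phi_{\lambda_i}|=C>0$ contradicts continuity via \eqref{limit_phi} and \eqref{eig_evol}. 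Corollary~\ref{corol_stab_eig} with $M=\Gamma$ then gives global asymptotic stability.

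For necessity I would argue as in the fixed-point case but with a linearization theorem adapted to periodic orbits (the analogue of \cite{Lan}): global stability plus hyperbolicity yields a $C^1$ change of coordinates putting the transverse dynamics in Floquet normal form $\dot z = B z$ (with $B$ carrying the Floquet exponents) coupled to a phase $\dot\theta=\omega$. In these coordinates the transverse linear functionals composed with the periodic Floquet frame give explicit $C^1$ Koopman eigenfunctions (the isostable coordinates of \cite{MMM_isostables}); pulling them back through the conjugacy produces the required $\phi_{\lambda_i}\in C^1(X)$ with $\nabla\phi_{\lambda_i}(x^\gamma)\cdot v_i\neq 0$. The main obstacle I anticipate is precisely this necessity step: unlike the fixed point there is no single off-the-shelf $C^1$ linearization result to cite verbatim, and establishing the conjugacy requires handling the neutral phase direction and possible resonances between the zero exponent and the stable ones. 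A secondary technical point is the careful bookkeeping of periodicity and the separation of the neutral from the stable directions in the adjoint equation above, which is what places the gradients transverse to $\Gamma$ and ultimately identifies the common zero set with the cycle.
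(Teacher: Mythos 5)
Your proof follows essentially the same route as the paper's: for sufficiency you derive the adjoint variational equation along $\Gamma$ directly in $\mathbb{R}^N$ (the paper does the equivalent computation in isochron-based transverse coordinates $(y,\theta)$), but both arguments identify $\nabla\phi_{\lambda_i}(x^\gamma)$ as a left eigenvector of the monodromy matrix with eigenvalue $e^{\lambda_i T}$, deduce independence and transversality of the gradients, rule out extra intersections of the zero level sets by the same contradiction as in the fixed-point case, and conclude via Corollary~\ref{corol_stab_eig} with $M=\Gamma$. The obstacle you anticipate in the necessity step is not actually an issue: the paper invokes Theorem~2.6 of \cite{Lan}, which supplies exactly the $C^1$ conjugacy of a globally stable hyperbolic limit cycle to $\dot z=Bz$, $\dot\theta=\omega$, and the remainder of your necessity argument (pulling back the linear eigenfunctions $z\cdot w_i$ through the conjugacy) coincides with the paper's.
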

\begin{proof}
\emph{Sufficiency.} Since the limit cycle is hyperbolic, there exists a $C^2$ local change of coordinates $y=y(x) \in \mathbb{R}^{n-1}$, $\theta=\theta(x) \in \mathbb{S}^1$, such that the dynamics become
\begin{equation}
\label{new_dyn_lim_cycle}
\dot{y}=G(y,\theta) \,, \qquad \dot{\theta}=\omega
\end{equation}
in the neighborhood $V$ of the limit cycle. The level sets of the $\theta$ coordinate are the isochrons and the $y$ coordinates are related to the directions transverse to the limit cycle (i.e., tangent to the isochrons). Moreover, we have $\omega=2\pi/T$ where $T$ is the period of the limit cycle, $y(x)=0$ for all $x \in \Gamma$, and without loss of generality $\theta(x^\gamma)=0$. The Koopman eigenfunctions $\tilde{\phi}_{\lambda_i}$ related to \eqref{new_dyn_lim_cycle} satisfy $\tilde{\phi}_{\lambda_i}(y(x),\theta(x))=\phi_{\lambda_i}(x)$. Next, we consider the first order Taylor approximations
\begin{equation*}
\tilde{\phi}_{\lambda_i}(y,\theta) = \tilde{\phi}_{\lambda_i}(0,\theta) + \nabla \tilde{\phi}_{\lambda_i}(0,\theta) \, y + o(\|y\|)
\end{equation*}
and
\begin{equation*}
G(y,\theta) = J_G(\theta) \, y + o(\|y\|) \,,
\end{equation*}
where $\nabla \tilde{\phi}_{\lambda_i}=(\partial \tilde{\phi}_{\lambda_i}/\partial y_1,\dots,\partial \tilde{\phi}_{\lambda_i}/\partial y_{n_1})$ and $J_G(\theta)$ is the Jacobian matrix of $G$ for $y=0$. (Note that $\tilde{\phi}_{\lambda_i}(0,\theta)=0$, according to Property \ref{property_eig2}.) Injecting these two approximations into \eqref{eig_equa}, we obtain for the first order terms
\begin{equation*}
J_G^T(\theta) \nabla \tilde{\phi}_{\lambda_i}(0,\theta) +\omega \frac{d \nabla \tilde{\phi}_{\lambda_i}}{d\theta} (0,\theta)  = \lambda_i \nabla \tilde{\phi}_{\lambda_i}(0,\theta)\,.
\end{equation*}
where the derivative of $\nabla \tilde{\phi}_{\lambda_i}(0,\theta)$ is well-defined according to the assumption. The solution is given by
\begin{equation}
\label{sol_Floquet}
\nabla \tilde{\phi}_{\lambda_i}(0,\theta) = \Psi(\theta) \nabla \tilde{\phi}_{\lambda_i}(0,0) \, e^{\lambda_i \theta / \omega}
\end{equation}
where the fundamental matrix $\Psi(\theta)$ satisfies $\Psi(0)=I$ and
\begin{equation*}
\frac{d \Psi}{d\theta}=-\frac{1}{\omega}J_G^T \, \Psi\,.
\end{equation*}
We remark that the differentiation of $\Psi^{-1} \Psi = I$ yields
\begin{equation*}
\frac{d \Psi^{-1}}{d\theta}=-\Psi^{-1} \frac{d\Psi}{d\theta} \Psi^{-1}=\frac{1}{\omega} \Psi^{-1} J_G^T
\end{equation*}
or equivalently
\begin{equation}
\label{def_monodromy}
\frac{d \Phi}{d\theta} = \frac{1}{\omega} J_G\, \Phi
\end{equation}
with $\Phi=\Psi^{-T}$. We can rewrite \eqref{sol_Floquet} as
\begin{equation*}
\Phi^T(\theta) \nabla \tilde{\phi}_{\lambda_i}(0,\theta) = \nabla \tilde{\phi}_{\lambda_i}(0,0) \, e^{\lambda_i \theta / \omega}
\end{equation*}
and for $\theta=2\pi$, we obtain
\begin{equation}
\label{Floquet_2pi}
\Phi^T(2\pi) \nabla \tilde{\phi}_{\lambda_i}(0,0) = \nabla \tilde{\phi}_{\lambda_i}(0,0) \, e^{\lambda_i T}\,.
\end{equation}
Since the gradient satisfies $\nabla \phi_{\lambda_i}(x^\gamma) \cdot v_i \neq 0$, it has a component tangent to the isochron, in the transverse direction related to $y$, so that $\nabla \tilde{\phi}_{\lambda_i}(0,0) \neq 0$. The relationship \eqref{Floquet_2pi} then implies that $\exp(\lambda_i T)$ is an eigenvalue of the monodromy matrix $\Phi(2\pi)$. It follows from \eqref{def_monodromy} that $\exp(\lambda_i T)$ is a Floquet multiplier of the limit cycle, or equivalently $\lambda_i$ is a Floquet exponent. In addition, $\nabla \tilde{\phi}_{\lambda_i}(0,0)$ is a left eigenvector of $\Phi(2\pi)$, so that it is perpendicular to $N-2$ Floquet vectors $v_j$, with $j\neq i$. Since the Floquet vectors are independent, the intersection in $V$ of the zero level sets of the $N-1$ different eigenfunctions $\phi_{\lambda_i}$ can only be $\Gamma$, and there is no other intersection in $X$. (A detailed proof is not repeated here but follows similar lines as the proof of Proposition \ref{prop_stab_fxpt}.)

Finally, since $\Re\{\lambda_i\}<0$, the result follows from Corollary \ref{corol_stab_eig} with $M=\Gamma$.

\emph{Necessity.} (The proof is inspired from results presented in \cite{Lan}.) Since the limit cycle is globally stable in $X$, it follows from Theorem 2.6 in \cite{Lan} that \eqref{syst} is conjugated to
\begin{equation}
\label{lin_lim_cycle}
\dot{z}=B \, z \,, \qquad \dot{\theta}=\omega
\end{equation}
through a $C^1$ diffeomorphism $(z,\theta)=h(x)=(h_1(x),h_2(x))$ such that $h_1(x)=0$ for all $x \in \Gamma$. The eigenvalues of the $(N-1)\times(N-1)$ matrix $B$ are the $N-1$ stable Floquet exponents $\lambda_i$. For \eqref{lin_lim_cycle}, there exist $N-1$ Koopman eigenfunctions $\tilde{\phi}_{\lambda_i}(z,\theta)=z \cdot w_i$, where $w_i$ are the left eigenvectors of $B$, which are associated with the Floquet exponents $\lambda_i$. It follows that the Koopman operator of \eqref{syst} has $N-1$ $C^1$ eigenfunctions of the form $\phi_{\lambda_i}=\tilde{\phi}_{\lambda_i} \circ h$, or equivalently $\phi_{\lambda_i}=h_1(x) \cdot w_i$ (with the same eigenvalues). Moreover, using the chain rule, we have
\begin{equation*}
\nabla \phi_{\lambda_i}(x^\gamma)=\left(\nabla_z \tilde{\phi}_{\lambda_i} (h(x^\gamma))\right)^T \, J_{h_1}(x^\gamma) + \frac{\partial \tilde{\phi}_{\lambda_i}}{\partial \theta}(h(x^\gamma)) \nabla h_2(x^\gamma) = w_i^T \, J_{h_1}(x^\gamma)\,,
\end{equation*}
where $J_{h_1}$ is the Jacobian matrix of $h_1$ and with $\nabla_z \tilde{\phi}= (\partial \tilde{\phi}/\partial z_1,\dots, \partial \tilde{\phi}/\partial z_{N-1})$. It follows that $\nabla \phi_{\lambda_i}(x^\gamma)$ is a left eigenvector of the monodromy matrix $J^{-1}_{h_1}(x^\gamma) e^{2\pi B/\omega} J_{h_1}(x^\gamma)$ in the $x$ coordinates, so that it satisfies $\nabla \phi_{\lambda_i}(x^\gamma) \cdot v_i \neq 0$. In addition, it is easy to see that, similarly to \eqref{sol_Floquet}, we have
\begin{equation}
\label{sol_Floquet_bis}
\nabla \phi_{\lambda_i}(\varphi^t(x^\gamma)) = \Psi(t) \nabla \phi_{\lambda_i}(x^\gamma) \, e^{\lambda_i t}
\end{equation}
with the fundamental matrix $\Psi(t)$ satisfying $d \Psi/dt=-J^T(\varphi^t(x^\gamma)) \, \Psi$ and $\Psi(0)=I$. Since the Jacobian matrix $J$ of $F$ is $C^1$, $\Psi(t)$ is also $C^1$ and it follows from \eqref{sol_Floquet_bis} that $\nabla \phi_{\lambda_i}$ is differentiable along $\Gamma$. This concludes the proof.
\end{proof}
As in the case of a fixed point, the result is the global equivalent of the well-known local stability result. While local stability depends on the eigenvalues of the monodromy matrix (Floquet exponents), global stability depends on $C^1$ Koopman eigenfunctions (associated with the same eigenvalues). 
\blue{In addition, the sufficiency part can be made stronger, with no assumption on the Floquet eigenvalues and eigenvectors. We have the following Corollary.
\begin{corollary}
\label{corol_stab_limcyc}
Let $X\subset \mathbb{R}^N$ be a connected, forward invariant, compact set and consider that \eqref{syst} admits a limit cycle $\Gamma \subset X$. If there exist $N-1$ Koopman eigenfunctions $\phi_{\lambda} \in C^1(X)$, with $\Re\{\lambda\}<0$ and such that, for some $x^\gamma \in \Gamma$, the $N-1$ vectors $\nabla \phi_\lambda(x^\gamma) \neq 0$ are linearly independent, then $\Gamma$ is globally asymptotically stable in $X$.
\end{corollary}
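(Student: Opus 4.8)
My plan is to reduce the statement to Corollary \ref{corol_stab_eig} by showing that the common zero set of the $N-1$ eigenfunctions is exactly the limit cycle, following the sufficiency part of Proposition \ref{prop_stab_limcyc} but now starting from the cleaner transversality hypothesis. Write $M=\bigcap_{i=1}^{N-1}\{x\in X \mid \phi_{\lambda_i}(x)=0\}$. Since each $\phi_{\lambda_i}\in C^0(X)$ has $\Re\{\lambda_i\}<0$, Corollary \ref{corol_stab_eig} already gives that $M$ is forward invariant and globally asymptotically stable in $X$; the entire task is therefore to prove $M=\Gamma$, after which the conclusion is immediate.

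First I would establish $\Gamma\subseteq M$ without invoking Property \ref{property_eig2}, whose global-stability hypothesis is not yet available. For $x\in\Gamma$ the orbit is $T$-periodic, so $\phi_{\lambda_i}(x)=\phi_{\lambda_i}(\varphi^T(x))=e^{\lambda_i T}\phi_{\lambda_i}(x)$; since $|e^{\lambda_i T}|=e^{\Re\{\lambda_i\}T}<1$ we have $e^{\lambda_i T}\neq 1$ and hence $\phi_{\lambda_i}(x)=0$. Thus every eigenfunction vanishes on $\Gamma$ and $\Gamma\subseteq M$.

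Next I would prove the reverse inclusion locally and then propagate it. Because each $\phi_{\lambda_i}$ vanishes along $\Gamma$, its gradient at $x^\gamma$ is orthogonal to the tangent direction $F(x^\gamma)$, so the $N-1$ independent gradients span the whole transverse space; handling complex eigenfunctions through their real and imaginary parts exactly as in the proof of Proposition \ref{prop_stab_fxpt}, the $\mathbb{C}$-independence of the gradients is equivalent to the $\mathbb{R}$-independence of the corresponding $N-1$ real normal directions. The implicit function theorem then makes the common zero set a one-dimensional manifold in a neighborhood $U$ of $x^\gamma$, which must coincide with the one-dimensional set $\Gamma$ there, i.e. $M\cap U=\Gamma\cap U$. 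To upgrade this from the single point $x^\gamma$ to a full tubular neighborhood $V$ of $\Gamma$, I would use that $M$ is invariant as a set under the flow, since $\phi_{\lambda_i}(\varphi^t(x))=0\Leftrightarrow\phi_{\lambda_i}(x)=0$: the time-$\tau$ flow map (a homeomorphism) carries $M\cap U$ onto $M\cap\varphi^\tau(U)$ and $\Gamma\cap U$ onto $\Gamma\cap\varphi^\tau(U)$, so $M$ agrees with $\Gamma$ near every $\varphi^\tau(x^\gamma)$; letting $\tau$ range over $[0,T]$ and invoking compactness of $\Gamma$ yields a neighborhood $V$ with $M\cap V=\Gamma$.

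It remains to rule out components of $M$ lying outside $V$, which I expect to be the main obstacle. Here I would transplant the global argument of Proposition \ref{prop_stab_limcyc} (itself modelled on Proposition \ref{prop_stab_fxpt}): if $M\neq\Gamma$, connectedness and forward invariance of $X$ force the basin boundary $\partial\Omega\cap X$ to be a nonempty forward-invariant set containing an $\omega$-limit point $x_\omega$, at which all $\phi_{\lambda_i}$ vanish by continuity; taking $x_\epsilon\in\Omega$ close to $x_\omega$ and flowing it forward for some time $T$ until it enters $V$ at a point $x_V\notin\Gamma$, the local step gives $|\phi_{\lambda_i}(x_V)|=C>0$ for some $i$, whence $|\phi_{\lambda_i}(x_\epsilon)|=C\,e^{-\Re\{\lambda_i\}T}>C>0$ for points arbitrarily close to $x_\omega$, contradicting continuity. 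This forces $M=\Gamma$, and Corollary \ref{corol_stab_eig} then delivers the global asymptotic stability of $\Gamma$. Beyond this continuity contradiction, the only technical point is the propagation of transversality around the whole cycle, which is precisely why I set up the flow-invariance of $M$ above; no estimate beyond those already present in Propositions \ref{prop_stab_fxpt} and \ref{prop_stab_limcyc} is required.
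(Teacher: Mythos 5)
Your proposal is correct and follows essentially the same route as the paper, whose proof of this corollary is simply to invoke Corollary \ref{corol_stab_eig} together with the argument already given in the sufficiency part of Proposition \ref{prop_stab_limcyc}; you have merely written out those ingredients explicitly (local transversality of the zero level sets at $x^\gamma$, propagation along $\Gamma$ by flow invariance, exclusion of extra components via the continuity contradiction, then Corollary \ref{corol_stab_eig}). Your one refinement --- deriving $\phi_\lambda|_\Gamma=0$ directly from periodicity and $e^{\lambda T}\neq 1$ rather than from Property \ref{property_eig2}, which presupposes global stability --- is a nice touch that removes an implicit circularity, but it does not change the overall approach.
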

\begin{proof}
The result follows directly from Corollary \ref{corol_stab_eig} and from the proof of Proposition \ref{prop_stab_limcyc}.
\end{proof}
}

We remark that Proposition \ref{prop_stab_limcyc} \blue{and Corollary \ref{corol_stab_limcyc}} cannot be applied if $X$ contains an unstable fixed point. The Koopman eigenfunctions are not $C^1$ at the unstable fixed point and the attractor is obviously not globally stable on $X$. Instead, Proposition \ref{prop_stab_limcyc} \blue{and Corollary \ref{corol_stab_limcyc}} must be considered with a set $X$ that does not contain a small disk centered at the unstable fixed point. The remark is also valid for Proposition \ref{prop_stab_fxpt} \blue{and Corollary \ref{corol_stab_fxpt} (see also Example 3 in \cite{MauroyMezic_CDC}).}

\section{Numerical methods}
\label{sec_num_methods}

The results of Section \ref{subsec_main_results} show the close relationship between particular Koopman eigenfunctions and global stability. In this section, we propose numerical techniques to compute these particular eigenfunctions, providing systematic ways to estimate the basin of attraction of the attractor or to establish global stability on a given subset of the state space. While existing methods for computing Koopman eigenfunctions rely on the evaluation of Laplace averages along the trajectories of the system (see e.g. \cite{MMM_isostables,Mezic_ann_rev}), the numerical schemes proposed here do not require the integration of trajectories. In accordance with the results of \cite{Mohr}, they rely on the expansion on a basis of polynomials. We use two different bases---Taylor and Bernstein polynomials---and consider separately the case of a stable fixed point and a stable limit cycle.

\subsection{Taylor expansion-based method for the fixed point}
\label{subsec_Taylor}

\blue{In this section,} we assume that the vector field $F$ is analytic and that the eigenvalues $\lambda_i$ of the Jacobian matrix at the fixed point $x^*$ are nonresonant \blue{(i.e. there do not exist integers $c_k \geq 0$ such that $\sum_{k} c_k \geq 2$ and $\lambda_i=\sum_k c_k \lambda_k$).} In this case, the Koopman eigenfunction admits a Taylor decomposition (at least in some neighborhood of $x^*$)
\begin{equation}
\label{Taylor_exp_phi}
\phi_{\lambda}(x)=\sum_{(k_1,\dots,k_N)\in \mathbb{N}^N}\phi_\lambda^{(k_1,\dots,k_N)} (x_1-x_1^*)^{k_1} \cdots (x_N-x_N^*)^{k_N}
\end{equation}
with
\begin{equation*}
\phi_\lambda^{(k_1,\dots,k_N)}=\frac{1}{k_1!\cdots k_N!} \left.\frac{\partial^{k_1+\cdots+k_N} \phi_\lambda}{\partial x_1^{k_1} \cdots \partial x_N^{k_N}} \right|_{x^*}
\end{equation*}
and with $x=(x_1,\dots,x_N)$. Similarly, the vector field $F(x)=(F_1(x),\dots,F_N(x))$ can be written as
\begin{equation}
\label{Taylor_exp_F}
F_l(x)=\sum_{(k_1,\dots,k_N)\in \mathbb{N}^N} F_l^{(k_1,\dots,k_N)} (x_1-x_1^*)^{k_1} \cdots (x_N-x_N^*)^{k_N}
\end{equation}
with
\begin{equation*}
F_l^{(k_1,\dots,k_N)}=\frac{1}{k_1!\cdots k_N!} \left.\frac{\partial^{k_1+\cdots+k_N} F_l}{\partial x_1^{k_1} \cdots \partial x_N^{k_N}} \right|_{x^*}\qquad l=1,\dots,n\,.
\end{equation*}
This decomposition in a basis of monomials can be regarded as the equivalent of the description of a measure through its moments in the dual Perron-Frobenius framework \cite{Lasserre_book}.

\blue{The monomials $(x_1-x_1^*)^{k_1} \cdots (x_N-x_N^*)^{k_N}$ can be represented as the components of the (infinite-dimensional) vector $X(x)$} and we rewrite \eqref{Taylor_exp_phi} as the product
\begin{equation}
\label{{Taylor_exp_phi}_vec}
\phi_\lambda(x) = \Phi^T \, X(x)
\end{equation}
where $\Phi$ is the vector containing the values $\phi_\lambda^{(k_1,\dots,k_N)}$.

Using basic properties of monomials, we can rewrite the eigenvalue equation \eqref{eig_equa} as
\begin{equation*}
\left(\sum_{l=1}^N \bar{M}^l \, \bar{D}^l \, \Phi \right)^T X(x) = \lambda \, \Phi^T \, X(x)
\end{equation*}
or equivalently
\begin{equation}
\label{num_Taylor}
\sum_{l=1}^N \bar{M}^l \,
\bar{D}^l \, \Phi = \lambda \, \Phi
\end{equation}
with
\begin{itemize}
\item the multiplication matrices
\begin{equation*}
\bar{M}^l=\sum_{k_1=0}^{\infty} \cdots \sum_{k_N=0}^{\infty} F_l^{(k_1,\dots,k_N)} M^{k_1} \otimes \cdots \otimes M^{k_N}
\end{equation*}
where $M^{k_l}$ are (infinite-dimensional) matrices with entries $M^{k_l}_{ij}=1$ if $j=i-k_l$ and $M^{k_l}_{ij}=0$ otherwise;
\item the differentiation matrices
\begin{equation*}
\bar{D}^l= \overbrace{I \otimes \cdots \otimes I}^{l-1 \textrm{ times}} \otimes D \otimes \overbrace{I \otimes \cdots \otimes I}^{N-l \textrm{ times}} \,,
\end{equation*}
where $I$ is the (infinite-dimensional) identity matrix and $D$ is a (infinite-dimensional) matrix with entries $D_{ij}=1$ if $i=j-1$ and $D_{ij}=0$ otherwise.
\end{itemize}
In order to solve the infinite-dimensional equation \eqref{num_Taylor}, we can consider a vector $\Phi^{(s)}$ whose components are the coefficients $\phi_\lambda^{(k_1,\dots,k_N)}$ related to the $s$th order in the Taylor expansion, i.e. $\sum_{i=1}^N k_i = s$. For each $s \in \mathbb{N}$, \eqref{num_Taylor} yields the $(s+N-1)!/((N-1)!s!)$-dimensional equation
\begin{equation}
\label{num_Taylor2}
H^{(s)} \Phi^{(s)} = \lambda \Phi^{(s)} + V^{(s)}
\end{equation}
where the coefficients $\phi_\lambda^{(k_1,\dots,k_N)}$ appearing in the expression of $V_i^{(s)}$ are related to an order smaller than $s$. Since the vector $V^{(s)}$ only depends on vectors $\Phi^{(s')}$ with $s'<s$, \eqref{num_Taylor2} can be solved recursively for increasing values $s$, a method which resembles a Carleman embedding method \cite{Carleman}. For $s=0$, we have $H^{(0)}=V^{(0)}=0$ since $F_l^{(0,\dots,0)}=0$ (i.e. $F_l(x^*)=0$). This yields the trivial solution $\phi_\lambda^{(0,\dots,0)}=0$ (i.e. $\phi_\lambda(x^*)=0$) . For $s=1$, we have $\Phi^{(1)}=\nabla \phi_\lambda(x^*)$, $H^{(1)}=J^T$ and $V^{(1)}=0$, so that \eqref{num_Taylor2} is equivalent to \eqref{gradient_eig}. In order to satisfy the assumption of Proposition \ref{prop_stab_fxpt}, the gradient $\nabla \phi_\lambda(x^*)$ must be nonzero, and is therefore a left eigenvector $w_i$ of $J$. The Koopman eigenvalue $\lambda$ is the corresponding eigenvalue $\lambda_i$ of the Jacobian matrix. This is in agreement with the results of Section \ref{subsub_fxpt}.

\paragraph*{Estimation of the basin of attraction}

According to the result of Proposition \ref{prop_stab_fxpt}, we can investigate the global stability of a fixed point by computing the Koopman eigenfunctions with the above numerical method. In addition, the method can be used to estimate the basin of attraction of the equilibrium. This is performed as follows: (i) build a candidate Lyapunov function of the form \eqref{Lyap_fct}, (ii) consider the region where this function is decreasing along the trajectories, and (iii) the largest closed level set of the Lyapunov function which is included in that region provides an inner approximation of the basin of attraction. This procedure can also be used to verify the results obtained with Taylor polynomials of small degree and to yield an approximate (conservative) region of stability.

\begin{example}
\label{ex3}
The dynamics
\begin{eqnarray*}
\dot{x}_1 & = & -x_2 \\
\dot{x}_2 & = & x_1-x_2+x_1^2 x_2
\end{eqnarray*}
of the well-known Van der Pol oscillator, here in backward-time, admit an unstable limit cycle which corresponds to the boundary of the basin of attraction of the stable origin. As shown in Figure \ref{eigenfct_ex34}(a), the method provides an accurate estimation of the basin of attraction.
\end{example}

\begin{example}
\label{ex4}
The dynamics
\begin{eqnarray}
\label{vdp_rev1}
\dot{x}_1 & = & x_2 \\
\label{vdp_rev2}
\dot{x}_2 & = & -2 x_1+\frac{1}{3} x_1^3-x_2
\end{eqnarray}
is characterized by a locally stable fixed point at the origin and two unstable saddle points at $(\pm\sqrt{6},0)$. Figure \ref{eigenfct_ex34}(b) shows that the numerical method provides a good estimation of the basin of attraction (delimited by the stable manifold of the saddle points). However, the largest approximation is obtained with a $14$th-order Taylor expansion and the method cannot capture the complete geometry of the basin. As explained below, this is due to the fact that the eigenfunctions admit a singularity at the saddle points and are analytic only on a subset of the basin of attraction.
\end{example}

\begin{figure}[h]
   \centering
   \subfigure[Example \ref{ex3}]{\includegraphics[height=5cm]{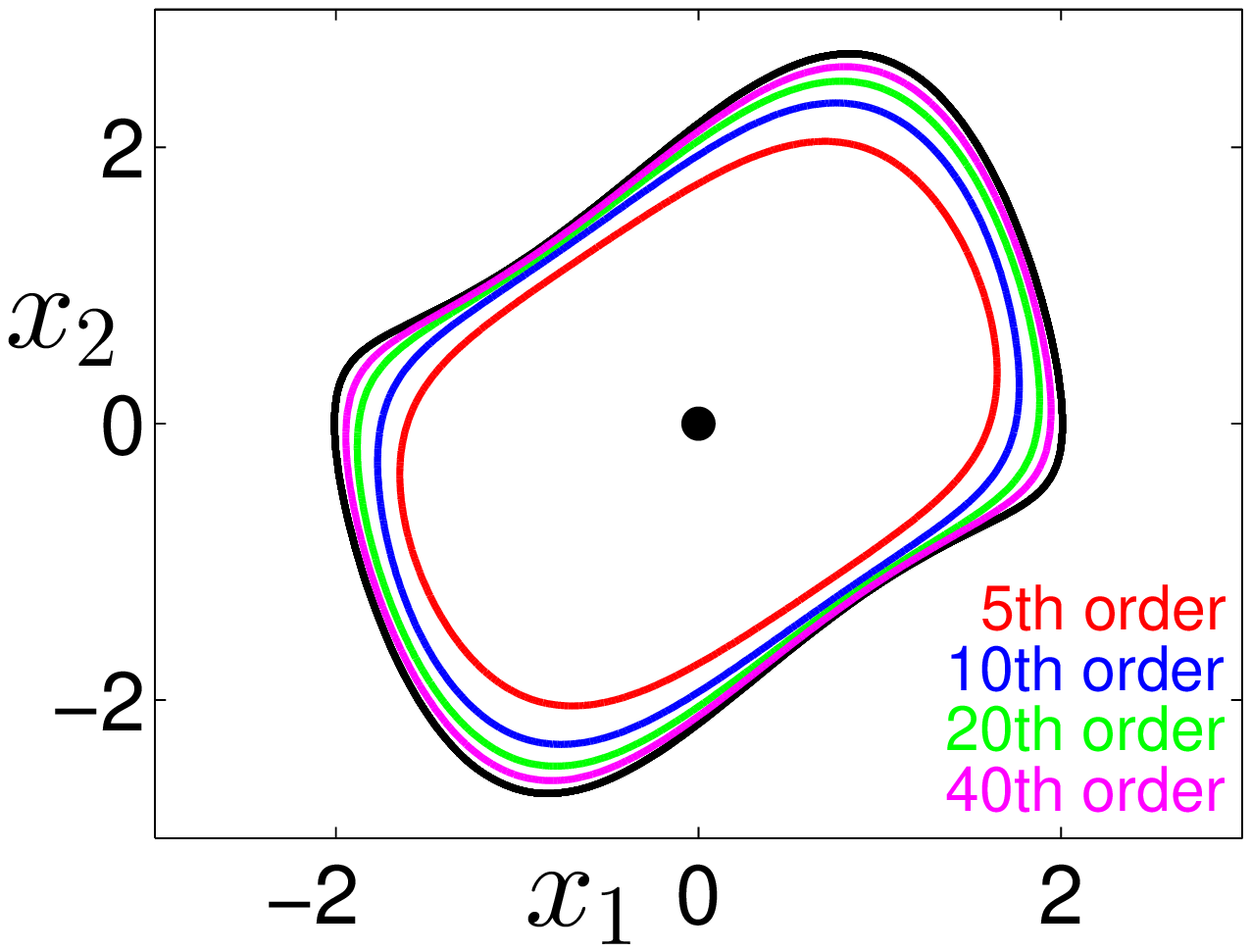}}
   \subfigure[Example \ref{ex4}]{\includegraphics[height=5cm]{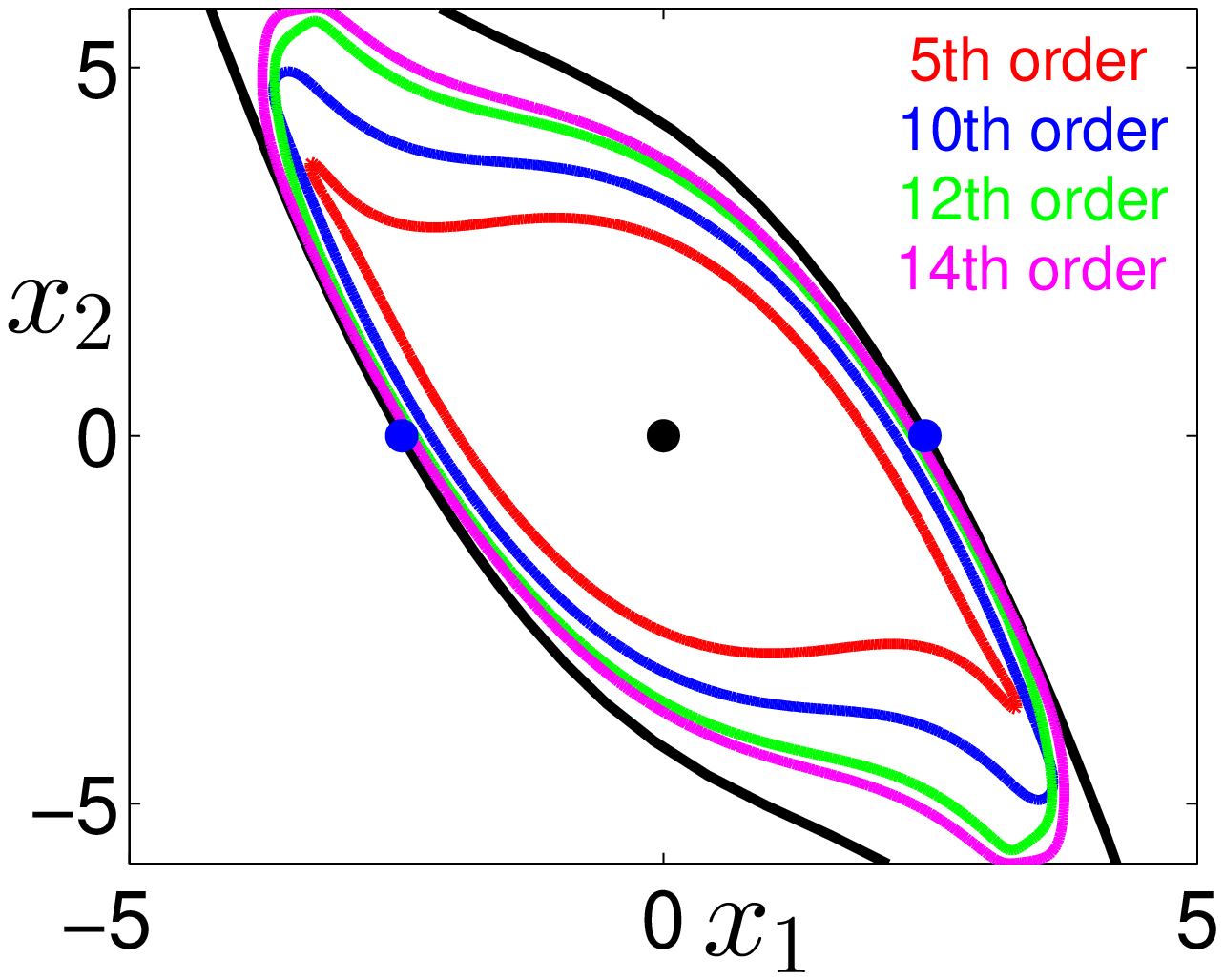}}
   \caption{The Taylor expansion-based method can be used to estimate the basin of attraction (black curve) of the stable fixed point (black dot). (a) Better approximations are obtained as the degree of the Taylor polynomials is increased. (b) The best result is obtained with a $14$th-order Taylor expansion. (In the two examples, $\lambda_1$ is complex and the Lyapunov function is $\mathcal{V}=|\phi_{\lambda_1}|=|\phi_{\lambda_2}|$.)}
   \label{eigenfct_ex34}
\end{figure}

\begin{example}[Non-analytic eigenfunctions]
\label{ex1}
For the dynamics
\begin{eqnarray*}
\dot{x}_1 & = & -3/4 x_1-1/8 x_2+1/4 x_1 x_2-1/4 x_2^2-1/2 x_1^3\,, \\
\dot{x}_2 & = & -1/8 x_1-x_2\,,
\end{eqnarray*}
the origin is globally stable in $X=[-2,2] \times [-2,2]$. \blue{However, the Taylor expansion of the eigenfunctions diverges in $X$, so that the method cannot prove global stability in this region. This is explained by the fact that the eigenfunctions are not analytic on $X$. Poincaré linearization theorem implies that the Koopman eigenfunctions are analytic on the \blue{largest} complex ball centered at the stable fixed point $x^*$ that does not contain a zero $z^* \neq x^* $ of $F(z)$ (see e.g. \cite{Gaspard}). Here, the vector field admits two complex zeros $z^* \approx -0.035 \pm 1.211i$ (acting as fictitious fixed points for the vector field in $\mathbb{C}$), so that the disk of analyticity of the eigenfunctions has a radius $\|z^*\|\approx 1.212$.}
\end{example}

\begin{figure}[h]
   \centering
   \subfigure{\includegraphics[width=7cm]{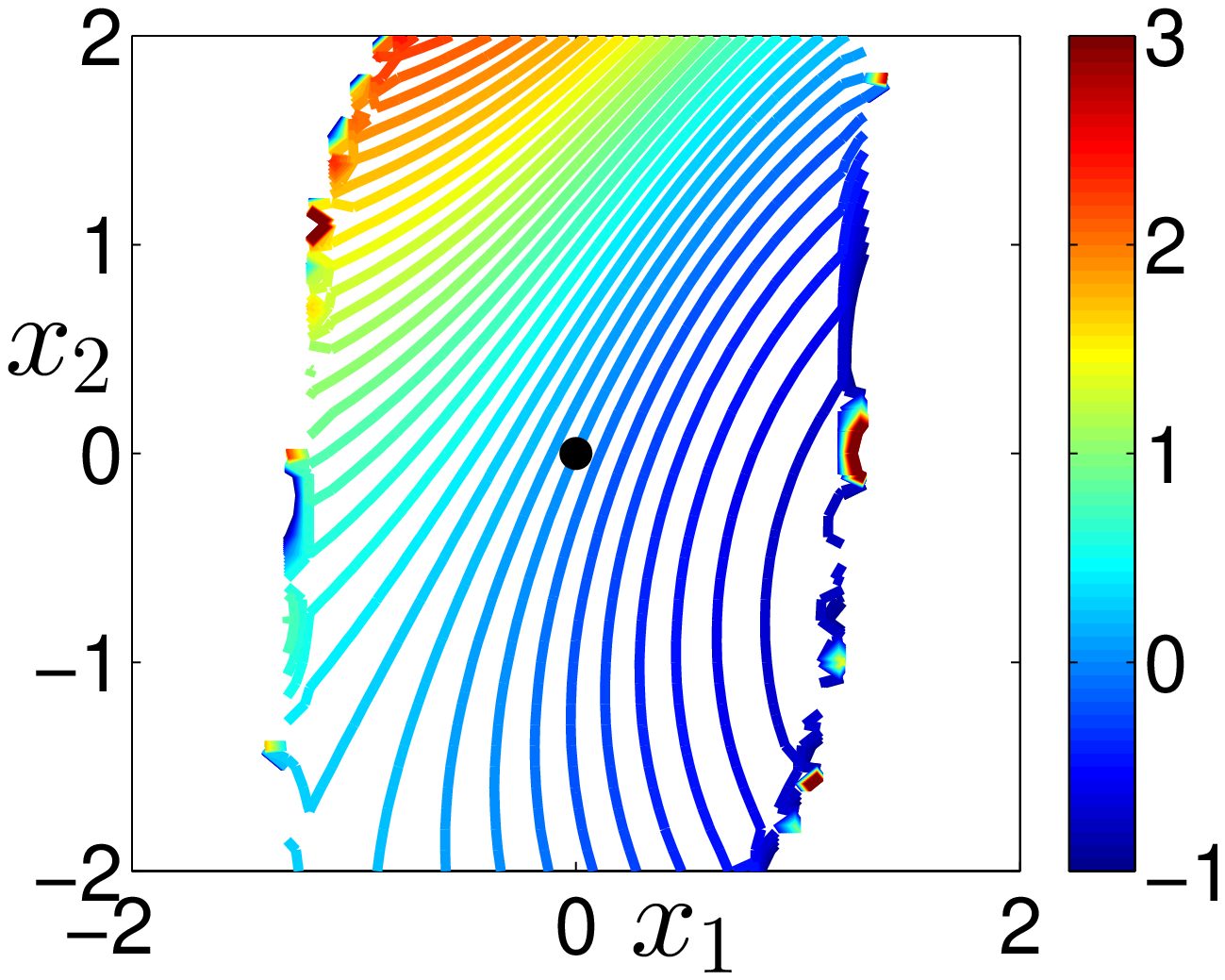}}
   \subfigure{\includegraphics[width=7cm]{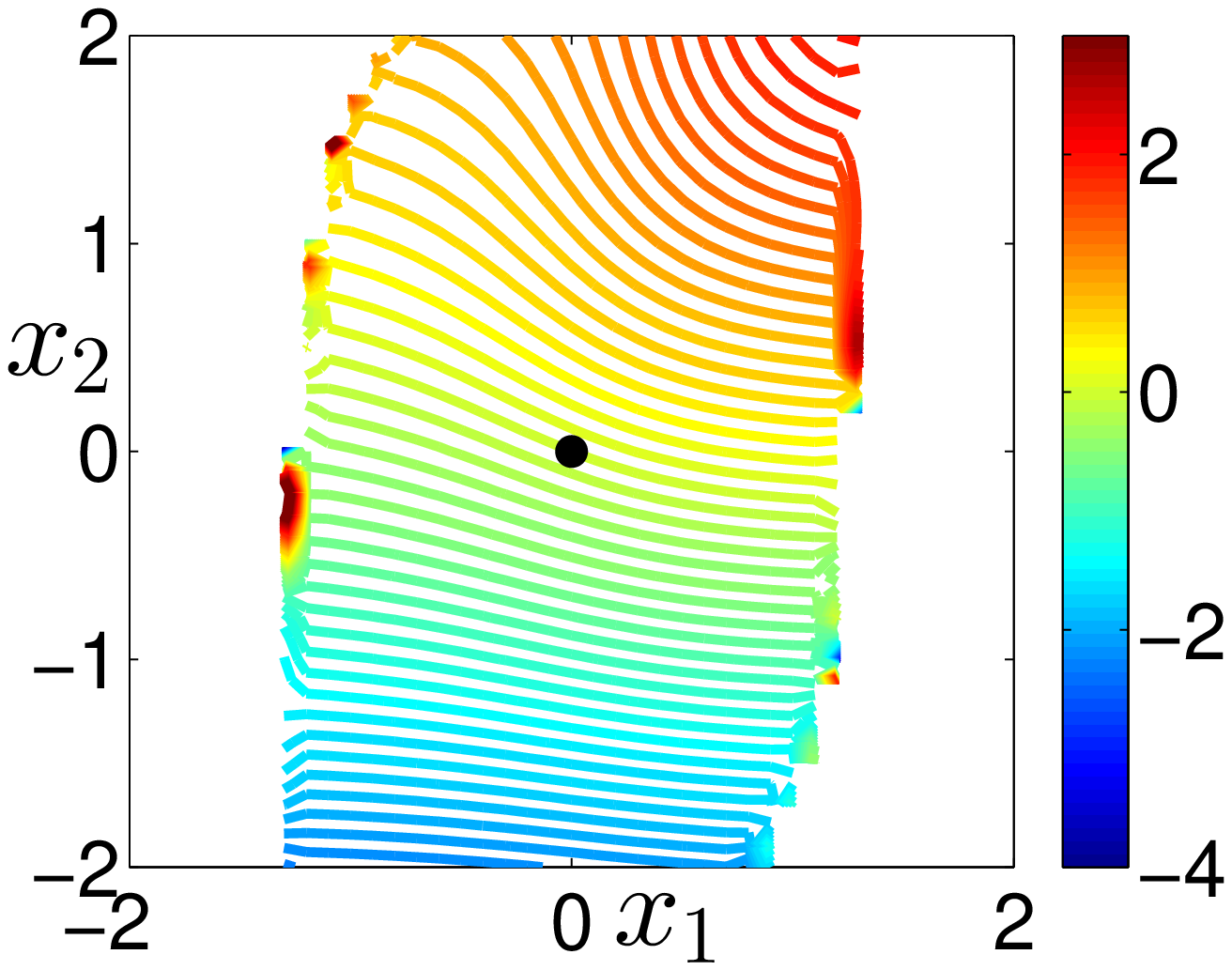}}
   \caption{In Example \ref{ex1}, the eigenfunctions are not analytic in the whole basin of attraction and the Taylor expansion-based method cannot compute them in the entire set $X=[-2,2]\times [-2,2]$. \emph{Left.} Level sets of the eigenfunction $\phi_{\lambda_1^*}$ ($\lambda_1\approx -0.698$). \emph{Right.} Level sets of the eigenfunction $\phi_{\lambda_2^*}$ ($\lambda_2 \approx -1.052$). (The eigenfunctions are computed with a Taylor expansion to the $75th$ order.)}
   \label{eigenfct_ex1}
\end{figure}

\subsection{Bernstein polynomial-based method for the fixed point}
\label{fx_pt_Bernstein}

Since the eigenfunctions are always continuous (but maybe not analytic) on the region of attraction, we can approximate them with polynomials, according to Weierstrass theorem. Also, the bad convergence results obtained with Taylor polynomials when the eigenfunctions are not analytic (see Example \ref{ex1}) can be drastically improved by considering Bernstein polynomials \cite{Bernstein_poly}. For $x\in[0,1]^N$, the eigenfunctions can be expanded in the basis of (multivariate) Bernstein polynomials of degree $s$ (in each variable)
\begin{equation*}
B_{k_1,\dots,k_N}^{s}(x) = \prod_{i=1}^N \binom{s}{k_i} \, x_i^{k_i} \, (1-x_i)^{s-k_i}\,, \quad 0\leq k_i \leq s \quad \forall i
\end{equation*}
and we have the approximation
\begin{equation}
\label{Phi_vector_Bernstein}
\phi_\lambda(x) \approx {\Phi^{(s)}}^T \, B^{s}(x)
\end{equation}
\blue{where $B^{s}(x)$ is the $(s+1)^N$-dimensional vector containing all the Bernstein polynomials $\Phi^{(s)}$ of degree $s$ (in each variable) and $\Phi^{(s)}$is the vector containing the coefficents of the expansion of $\phi_\lambda(x)$ in the basis of Bernstein polynomials.}

It is important to note that an affine change of variables might be used to ensure that $x^*\in [0,1]^N$ and that the set $[0,1]^N$ covers the whole region of interest. This change of variables modifies the dynamics to be considered in the eigenvalue equation \eqref{eig_equa}, but does not modify the eigenvalues of the Koopman operator. For instance, the change of variable $ x'=x/\alpha$ yields the dynamics $\dot{x}'=F(\alpha x')/\alpha$.

Using basic operations on Bernstein polynomials (see Appendix \ref{appendix2}), we can rewrite the eigenvalue equation \eqref{eig_equa} as
\begin{equation}
\label{Bernstein_equa1}
\left(\sum_{l=1}^N \bar{M}^l \, \bar{D}^{s,l} \, \Phi^{(s)} \right)^T \, B^{s+s'}(x) \approx \lambda \, \left(\bar{T}^{s,s'}\, \Phi^{(s)}\right)^T \, B^{s+s'}(x)
\end{equation}
where
\begin{itemize}
\item the differentiation matrices $\bar{D}^{s,l}$ are given by \eqref{coeff_D_ndim};
\item \blue{the multiplication matrices $\bar{M}^l$ are given by \eqref{M_ndim} (where $q^{(k_1,\dots,k_n)}$ are the coefficients of the expansion of $F_l(x)$ in the basis of Bernstein polynomials $B_{k_1,\dots,k_N}^{s'}(x)$);}
\item the matrix for degree raising $\bar{T}^{s,s'}$ are given by \eqref{coeff_T_ndim}.
\end{itemize}
For the computation of the eigenfunction $\phi_{\lambda_i}$, we need to impose the additional properties (i) $\phi_{\lambda_i}(x^*)=0$ since $\phi_{\lambda_i}\in \mathcal{F}_{A_c}$ and (ii) $\nabla \phi_{\lambda_i}(x^*)=w_i$, where $w_i$ is the left eigenvector (associated with the eigenvalue $\lambda_i$) of the Jacobian matrix $J$ at the fixed point (see \eqref{gradient_eig}). We have
\begin{eqnarray}
\label{Bernstein_equa2}
\left(B^{s}(x^*)\right)^T \Phi^{(s)} \,  & \approx & 0 \\
\label{Bernstein_equa3}
\left(\nabla B^{s}(x^*)\right)^T \Phi^{(s)} \,  & \approx & w_i
\end{eqnarray}
with the $(s+1)^N \times N$ matrix $\nabla B=[\partial B/\partial x_1 \cdots \partial B/\partial x_N]$. It follows from \eqref{Bernstein_equa1} (satisfied for all $x \in [0,1]^N$), \eqref{Bernstein_equa2} and \eqref{Bernstein_equa3} that $\Phi^{(s)}$ is solution of
\blue{
\begin{equation*}
\left[ \begin{array}{c}
\sum_{l=1}^N \bar{M}^l \, \bar{D}^{s,l} - \lambda_i \, \bar{T}^{s,s'} \\
\left(B^{s}(x^*)\right)^T \\
\left(\nabla B^{s}(x^*)\right)^T
\end{array} \right] \, \Phi^{(s)} = \left[ \begin{array}{c}
0\\
\vdots \\
0\\
w_i
\end{array} \right]\,.
\end{equation*}
}
This system of equations is overdetermined (since the eigenfunction is approximated with a finite number of Bernstein polynomials). Its least squares solution (obtained with the Moore-Penrose pseudoinverse) provides the coefficients $\Phi^{(s)}$, and an approximation of the eigenfunction is obtained with \eqref{Phi_vector_Bernstein}. If the system is globally stable, there exists a $C^1$ eigenfunction in $X$ and the least squares error tends to zero as $s \rightarrow \infty$.

The Koopman eigenfunctions computed with the Bernstein polynomial-based method satisfy the conditions of Proposition \ref{prop_stab_fxpt}. The existence of a (accurate) solution to the numerical method proves global stability of the system in $X$. As a confirmation of the numerical result, one can compute a Lyapunov function \eqref{Lyap_fct} with the polynomial approximations of the eigenfunctions and verify that it is decreasing along the trajectories. The following example shows that the method works with non-analytic eigenfunctions.

\begin{example}
\label{ex_Bernstein}
Consider again the system given in Example \ref{ex1}. In contrast to the Taylor-expansion based method, the Bernstein polynomial-based method can compute the non-analytic eigenfunctions in the entire set $X=[-2,2]\times [-2,2]$ (Figure \ref{eigenfct_Bernstein}), thereby proving global stability in $X$. The computation of a good candidate Lyapunov function of the form \eqref{Lyap_fct} (not shown) confirms the result.
\end{example}

\begin{figure}[h]
   \centering
   \subfigure{\includegraphics[width=7cm]{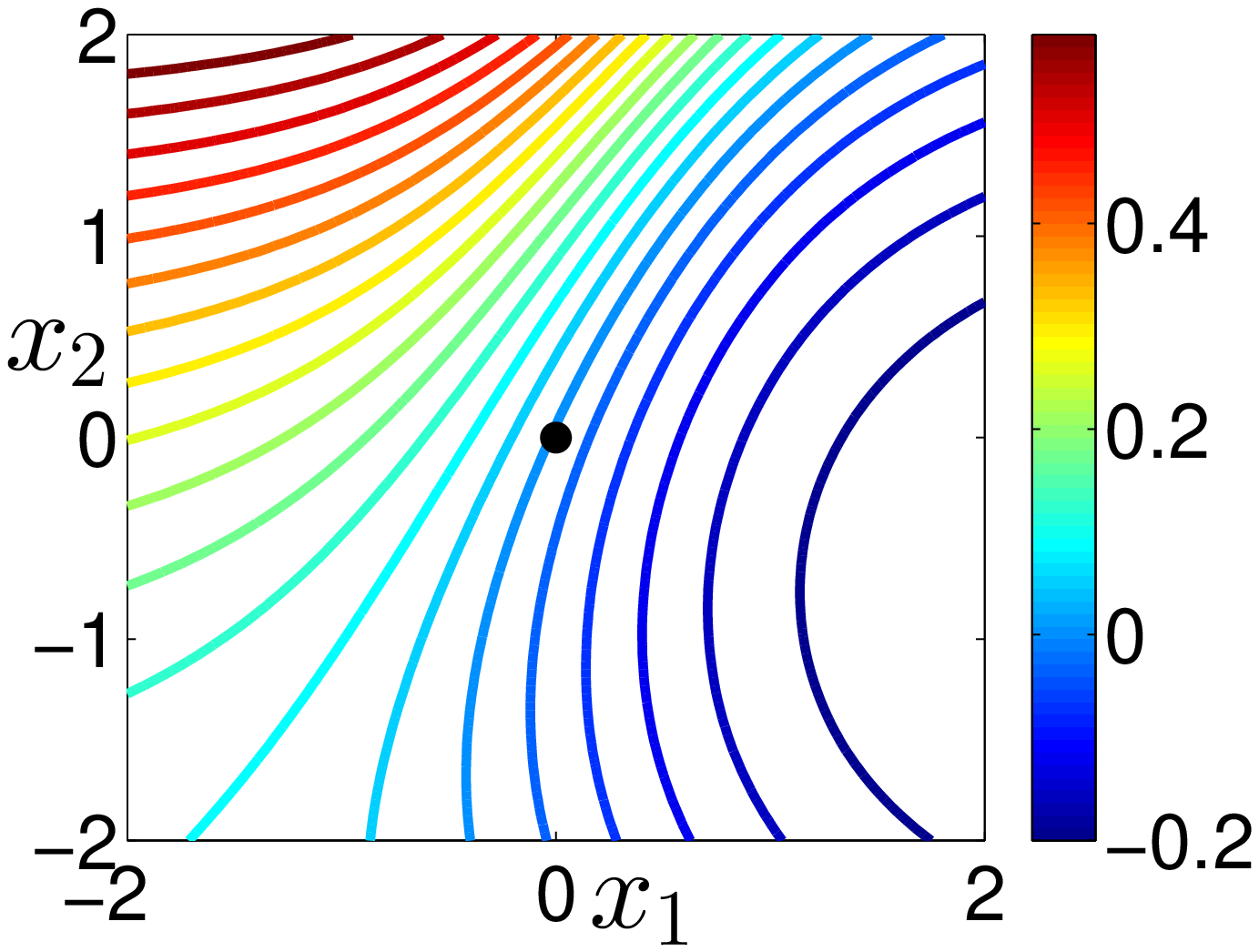}}
   \subfigure{\includegraphics[width=7cm]{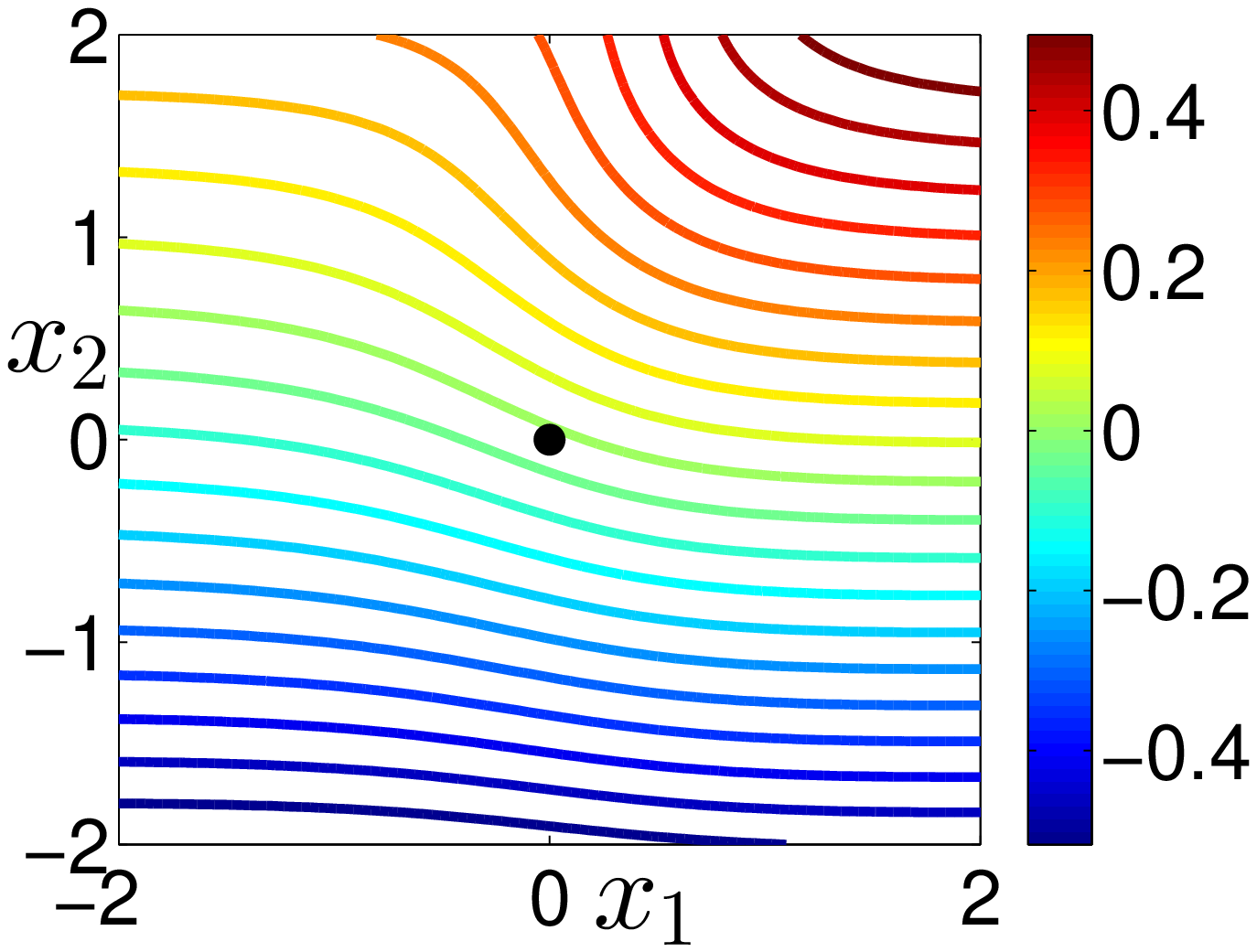}}
   \caption{Same eigenfunctions as in Figure \ref{eigenfct_ex1} (Example \ref{ex1}), but computed with the Bernstein polynomial-based method. The existence of smooth eigenfunctions in $X=[-2,2]\times [-2,2]$ implies global stability of the fixed point (black dot) on $X$. \emph{Left.} Level sets of the eigenfunction $\phi_{\lambda_1^*}$. \emph{Right.} Level sets of the eigenfunction $\phi_{\lambda_2^*}$. (The eigenfunctions are computed with Bernstein polynomials of degree $s=75$ in each variable.)}
   \label{eigenfct_Bernstein}
\end{figure}

\paragraph*{Discussion on the two methods} 

The main advantage of the Bernstein polynomial-based method is that it is characterized by good convergence properties even when the eigenfunctions are not analytic (but continuous) in $X$. This contrasts with the Taylor expansion-based method, which may fail with systems that are globally stable in $X$ (see Example \ref{ex1}). However, if the considered set $X$ contains an unstable fixed point or the boundary of the basin of attraction, the Bernstein polynomial-based method produces inaccurate results (independently of the degree of the polynomials) \blue{and cannot even provide an estimation of the basin of attraction. In other words, the method yields only a dichotomous output: globally stable in the entire set $X$ or not.}

The results obtained with the Taylor expansion-based method are accurate in a neighborhood of the equilibrium, so that global stability can always be proven in some region close to the equilibrium (i.e. inside a closed level set of a candidate Lyapunov function of the form \eqref{Lyap_fct}). The method is therefore useful to obtain successive approximations of the region of stability: Taylor polynomials of higher degree are gradually added to the truncated basis and the computation of higher order Taylor coefficients relies on the results obtained for lower orders in previous computations. \blue{In contrast, for the Bernstein polynomial-based method, inaccurate results cannot be used even in the neighborhood of the equilibrium. The degree of the polynomials must be increased and a different basis of Bernstein polynomials must be considered.}

\blue{The two methods require only a short analytic pre-processing (i.e. expansion of the vector field in the chosen polynomial basis) and compute the Koopman eigenfunctions of planar systems in a few minutes.}

\subsection{Bernstein polynomial-based method for the limit cycle}
\label{sub_sub_lim_cycle}

When the attractor is a limit cycle, the eigenfunctions can be computed on a polynomial basis in the direction transverse to the attractor and on a Fourier basis in the direction tangential to the attractor. (Recall that, according to \cite{Mohr}, an appropriate space of functions is the space of polynomials with coefficients corresponding to periodic functions on the attractor.) Since the presence of an unstable fixed point implies that the eigenfunctions are analytic only in a (usually close) neighborhood of the limit cycle, it is necessary to use Bernstein polynomials rather than monomials.

We assume that the dynamics are two-dimensional ($N=2$) and that the origin is contained inside the limit cycle (possibly after a translation of the dynamics). We consider polar-type coordinates $(\theta,y)$ such that $x=x^\gamma(\theta) + (y+\Delta) e_r(\theta)$, where $x^\gamma(\theta)$ is a point of the limit cycle, $e_r(\theta)$ is a (not necessarily unit) radial vector aligned with $x^\gamma(\theta)$, and $\Delta$ is a constant. We suppose here that the limit cycle---i.e. the set $\{(\theta,y)|\theta=[0,2\pi),y=-\Delta\}$---can be parametrized by the variable $\theta$. In the new coordinates, we have the dynamics $\dot{\theta}  =  F_\theta(\theta,y)$, $\dot{y}  =  F_y(\theta,y)$ (see \eqref{F_theta}-\eqref{F_y} in Appendix \ref{appendix3}) and the eigenvalue equation \eqref{eig_equa} is given by
\begin{equation}
\label{eigenvalue_eq_polar}
F_y \, \frac{\partial \phi_\lambda}{\partial y} + F_\theta \, \frac{\partial \phi_\lambda}{\partial \theta}=\lambda \phi_\lambda\,.
\end{equation}
In the annular region $y\in[0,1]$, we can expand the eigenfunctions in a Bernstein polynomials basis of degree $s$ ($y$ coordinate) and in a Fourier basis ($\theta$ coordinate). Considering a truncated Fourier expansion $|n| \leq \bar{n}$, \blue{the functions of the Fourier-Bernstein basis are the components of the $(2\bar{n}+1)(s+1)$-dimensional vector
\begin{equation}
\label{Fourier_Bernstein_basis}
B^{\bar{n},s} (\theta,y)=\left[\begin{array}{c}e^{-i\bar{n} \theta} \\
e^{i(-\bar{n}+1)\theta} \\
\vdots \\
e^{i\bar{n} \theta}
\end{array} \right] \otimes \left[\begin{array}{c} b_1^{s}(y) \\
\vdots \\
b_s^{s+1}(y)
\end{array} \right] 
\end{equation}
with $b^{s}_{k+1}(y)=\binom{s}{k} \, y^{k} \, (1-y)^{s-k}$ and we have the expansion
\begin{equation}
\label{Phi_vector_Bernstein2}
\phi_\lambda(x) \approx {\Phi^{(\bar{n},s)}}^T \, B^{\bar{n},s}(\theta,y) \,.
\end{equation}
}
Using basic operations on Fourier series and Bernstein polynomials (see Appendix \ref{appendix1}), we can rewrite \eqref{eigenvalue_eq_polar} as
\begin{equation}
\label{Bernstein_lim_cyc_equa1}
\left( \left(\bar{M}^y \bar{D}^y + \bar{M}^\theta \bar{D}^\theta \right) \Phi^{(\bar{n},s)} \right)^T \, B^{\bar{n},s}(\theta,y) \approx \lambda \, \left(\bar{T} \Phi^{(\bar{n},s)} \right)^T \, B^{\bar{n},s}(\theta,y)
\end{equation}
with
\begin{itemize}
\item the multiplication matrices
\begin{equation*}
\bar{M}^l=\sum_{n=-\bar{n}}^{\bar{n}} \sum_{k=0}^{s'} F_l^{(n,k)} M^{n,\theta} \otimes M^{k,y}
\end{equation*}
where $M^{n,\theta}$ are the $(2\bar{n}+1) \times (2\bar{n}+1)$ matrices with entries $M^{n,\theta}_{ij}=1$ if $j=i-n$ and $M^{n,\theta}_{ij}=0$ otherwise, $M^{k,y}$ are the $(s+s'+1) \times (s+1)$ matrices given by \eqref{coeff_M_ndim} (with $k_l=k$), \blue{and $F_l^{(n,k)}$ are the coefficients of $F_l(x)$ in the Fourier-Bernstein basis $e^{in\theta} b_{k+1}^{s'}$, $n\in[-\bar{n},n]$, $k\in[0,s']$;}
\item the differentiation matrices
\begin{eqnarray*}
\bar{D}^y & = & I^{2\bar{n}+1} \otimes D^s \\
 \bar{D}^\theta & = & \textrm{diag}(-i\bar{n}, i(-\bar{n}+1), \dots, i\bar{n}) \otimes I^{s+1}
\end{eqnarray*}
where $\bar{D}^s$ is the $(s+1) \times (s+1)$ matrix given by \eqref{coeff_D} and with $I^{2\bar{n}+1}$ and $I^{s+1}$ the $(2\bar{n}+1) \times (2\bar{n}+1)$ and $(s+1) \times (s+1)$ identity matrices;
\item the matrix for degree raising
\begin{equation*}
\bar{T} = I^{2\bar{n}+1} \otimes T^{s,s'}
\end{equation*}
where $T^{s,s'}$ is the $(s+s'+1) \times (s+1)$ matrix given by \eqref{coeff_T}.
\end{itemize}

\blue{
For the computation of the eigenfunction $\phi_{\lambda}$ where $\lambda$ is the nonzero Floquet exponent of the limit cycle, we need to impose the value of $\phi_{\lambda}$ and $\partial \phi_{\lambda}/\partial y$ on the limit cycle, i.e. for $y=-\Delta$.
Let $c_1$ and $c_2$ denote the $(2\bar{n}+1)$-dimensional vectors such that $\phi_{\lambda}(\theta,-\Delta) \approx [e^{-i \bar{n} \theta} \cdots e^{i \bar{n} \theta}] c_1$ and $\frac{\partial \phi_{\lambda}}{\partial y}(\theta,-\Delta) \approx [e^{-i \bar{n} \theta} \cdots e^{i \bar{n} \theta}] c_2$. Since $\phi_{\lambda}(\theta,-\Delta)=0$ $\forall \theta$, we have $c_1=0$. Also, it can be shown from \eqref{eigenvalue_eq_polar} that
\begin{equation*}
\frac{\partial \phi_{\lambda}}{\partial y}(\theta,-\Delta) = \exp \left(\int_0^\theta \left(\lambda-\frac{\partial F_y}{\partial y}(\sigma,-\Delta)\right)\Big/F_\theta(\sigma,-\Delta) \, d\sigma \right) \,,
\end{equation*}
which allows us to obtain the Fourier coefficients $c_2$. (Note that $\partial F_y/\partial y$ can be computed from \eqref{F_theta}-\eqref{F_y}.) We have
\begin{eqnarray}
\label{Bernstein_lim_cyc_equa2}
{\Phi^{(\bar{n},s)}}^T \, I^{2\bar{n}+1} \otimes b^{s}(-\Delta) & \approx & 0\\
\label{Bernstein_lim_cyc_equa3}
{\Phi^{(\bar{n},s)}}^T \, I^{2\bar{n}+1} \otimes \frac{d b^{s}}{d y}(-\Delta) & \approx & c_2^T
\end{eqnarray}
and it follows from \eqref{Bernstein_lim_cyc_equa1}, \eqref{Bernstein_lim_cyc_equa2} and \eqref{Bernstein_lim_cyc_equa3} that $\Phi^{(\bar{n},s)}$ is solution of the equation
\begin{equation*}
\left[ \begin{array}{c}
\bar{M}^y \bar{D}^{s,y} + \bar{M}^\theta \bar{D}^{s,\theta} - \lambda \bar{T}^{s,s'}   \\ 
I^{2\bar{n}+1} \otimes \left(b^{s}(-\Delta)\right)^T \\
I^{2\bar{n}+1} \otimes \left(\frac{db^{s}}{dy}(-\Delta)\right)^T
\end{array} \right]\, \Phi^{(\bar{n},s)} = \left[ \begin{array}{c} 0 \\
\vdots \\
0 \\
c_2
\end{array} \right] \,.
\end{equation*}
}
The coefficients $\Phi^{(s)}$ can be computed by least squares estimation and an approximation of the eigenfunction is obtained with \eqref{Phi_vector_Bernstein2}.\\

\begin{remark}[Symmetry]
\label{rem_symmetry}
The eigenfunctions are characterized by the same symmetry properties as the dynamics, in which case the coefficients $\phi_\lambda^{(m n,k)}$, with some positive integer $m$, are the only nonzero coefficients for all $n,k$. This can be exploited to increase the computational efficiency of the numerical scheme.
\end{remark}

The Koopman eigenfunction computed through the numerical method satisfies the conditions of Proposition \ref{prop_stab_limcyc}. The existence of a (accurate) solution to the numerical method proves the global stability of the limit cycle in $X$. This is illustrated in the following examples. \blue{Depending on the size of the basis, the method takes from one minute (Example \ref{ex_polar}) to about one hour (Example \ref{ex_vdp_lim_cycle}).}
\begin{example}
\label{ex_polar}
The dynamics in polar coordinates
\begin{eqnarray*}
\dot{\theta} & = & 1 \\
\dot{r} & = &  \left(2+\cos 6\theta -\cos 10 \theta \right) r(1-r^2)
\end{eqnarray*}
admit a limit cycle at $r=1$. The Koopman eigenfunction associated with the nonzero Lyapunov exponent ($\lambda=-4$) is computed on the annular region $(\theta,r)\in [0,2\pi) \times [1,3]$ (Figure \ref{lim_cycle_Bernstein}(a)). According to Proposition \ref{prop_stab_limcyc}, the limit cycle is stable on that region, a property which is verified by the fact that $\dot{r}\leq 0 $ for all $r \geq 1$.
\end{example}
\begin{example}
\label{ex_vdp_lim_cycle}
We consider the Van der Pol system
\begin{eqnarray*}
\dot{x}_1 & = & x_2 \\
\dot{x}_2 & = & -x_1+x_2-x_1^2 x_2
\end{eqnarray*}
(i.e. dynamics \eqref{vdp_rev1}-\eqref{vdp_rev2} in reverse time) which admits a stable limit cycle. Since a Koopman eigenfunction satisfying the assumptions of Proposition \ref{prop_stab_limcyc} can be computed on an annular region around the limit cycle (Figure \ref{lim_cycle_Bernstein}(b)), the limit cycle is globally stable on that region.
\end{example}

\begin{figure}[h]
   \centering
   \subfigure[Example \ref{ex_polar}]{\includegraphics[width=7cm]{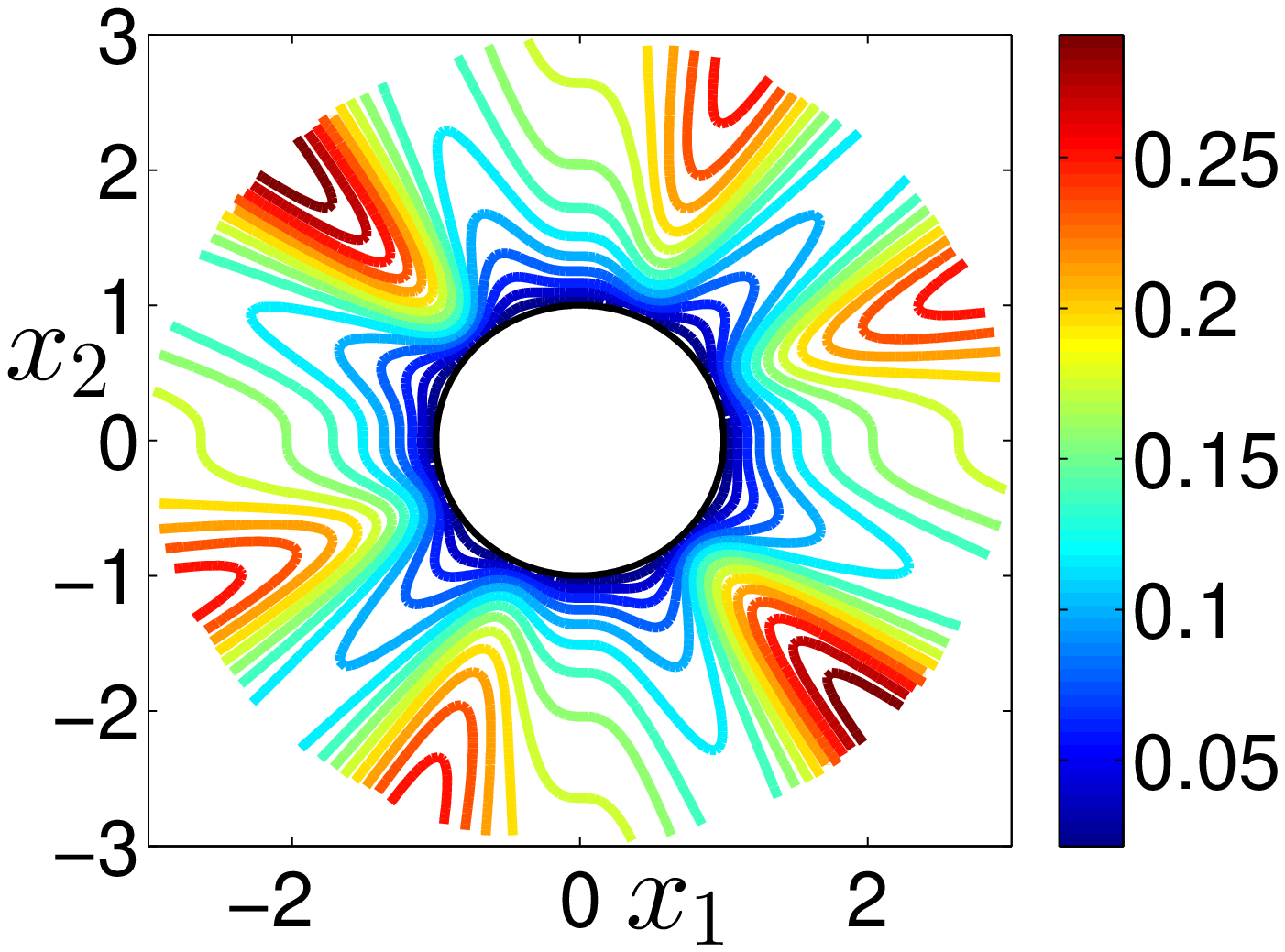}}
   \subfigure[Example \ref{ex_vdp_lim_cycle}]{\includegraphics[width=7cm]{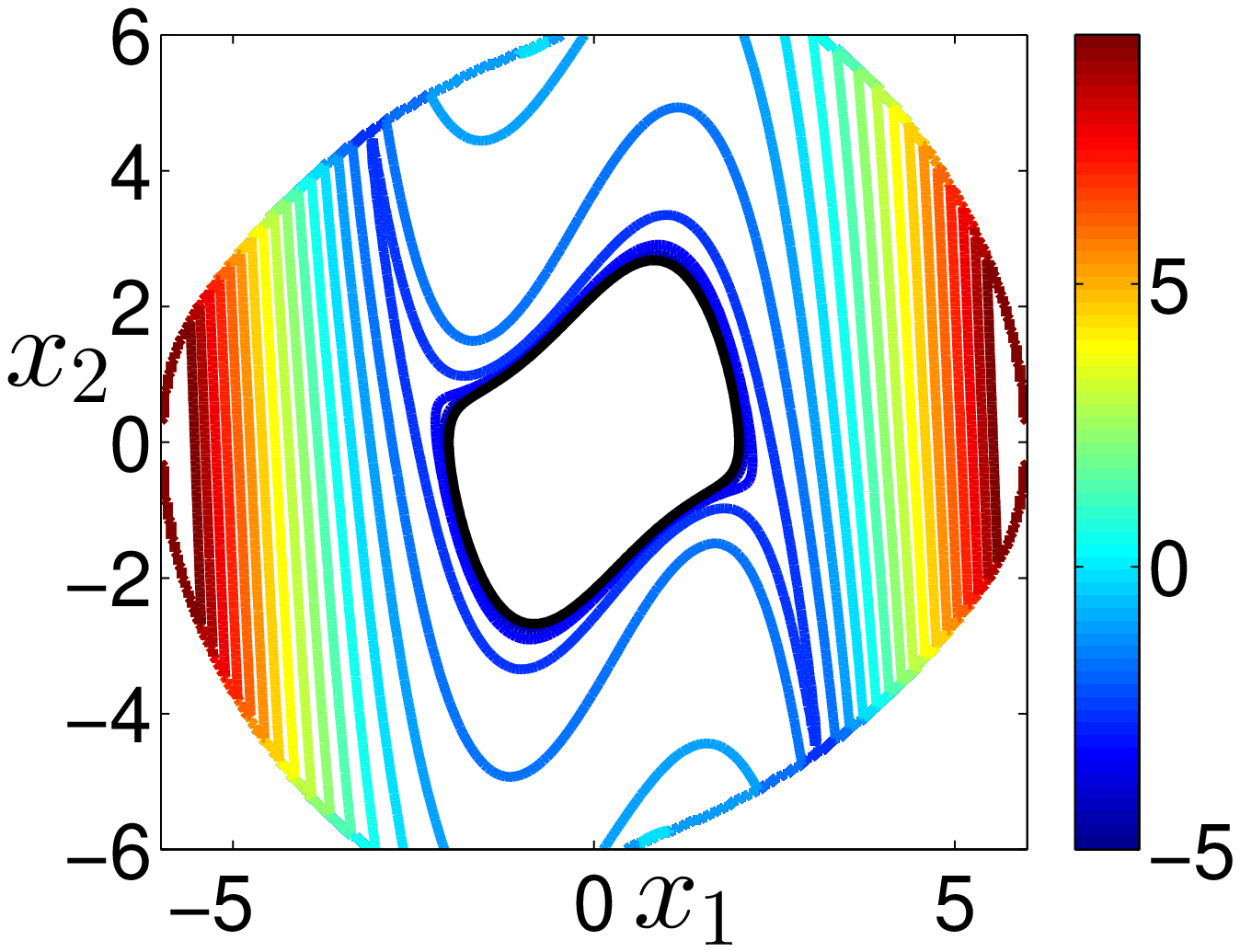}}
   \caption{An eigenfunction satisfying the assumptions of Proposition \ref{prop_stab_limcyc} is computed in an annular region around the limit cycle (black curve) and implies global stability of the limit cycle in that region. (a) The eigenfunction is computed with Bernstein polynomials of degree $s=20$ and Fourier series truncated at $\bar{n}=40$. Other parameters: $\Delta=0$, $\|e_r\|=2$. The figure shows the level sets of $\log |\phi_\lambda|$. (b) The eigenfunction is computed with Bernstein polynomials of degree $s=30$ and Fourier series truncated at $\bar{n}=200$. Other parameters: $\Delta=0$, $\|e_r\|=4$. In the two examples, $\phi_\lambda^{(2 n+1,k)}=0$ for all $n,k$ due to symmetry properties, so that only even-numbered harmonics are considered (see Remark \ref{rem_symmetry}).}
   \label{lim_cycle_Bernstein}
\end{figure}

\section{Conclusion}
\label{conclu}

Using an operator-theoretic framework, we have obtained global stability results for nonlinear systems. These results can be interpreted as the global equivalent to classic results for local stability. In particular, necessary and sufficient conditions have been derived, which rely on the existence of continuously differentiable eigenfunctions of the Koopman operator. The theoretical results are complemented with several numerical methods which are based on the decomposition onto a particular polynomial basis. A first method using monomials (i.e. Taylor expansion) enables us to approximate the region of stability of a fixed point. A second method using Bernstein polynomials is well-suited to the case of non-analytic eigenfunctions and enables us to prove (or disprove) the global stability of a fixed point or a limit cycle in a given region of interest.

The results presented in this paper might pave the way for an alternative approach to global stability analysis. But since the eigenfunctions capture more properties than mere stability (e.g. rate of convergence, etc.), the Koopman operator-theoretic framework is also a general method for studying the global behavior of dissipative nonlinear systems.

We envision several future research directions with some challenges to be overcome. Other numerical methods computationally cheaper than the methods proposed in this paper could be proposed (in particular for the limit cycle). Considering SOS techniques \cite{Parrilo_thesis} and the dual method of moments \cite{Lasserre_book} might be a promising approach toward this aim (see also \cite{Henrion2} in the context of region of attraction estimation). In addition, due to the curse of dimensionality, the numerical methods based on polynomial approximations are typically limited to \blue{low-dimensional systems}. For higher dimensions, the Koopman eigenfunctions can be obtained through averages methods as long as the computation of trajectories is permitted \cite{MMM_isostables,Mezic}. \blue{Otherwise, novel methods could be developed, for instance inspired from compressed sensing (combining polynomial expansions in low-dimensional subspaces with averages methods using partial knowledge of the trajectories), sampling (e.g. extended dynamic mode decomposition \cite{Rowley_EDMD}), or machine learning (e.g. kernel-based method \cite{Williams_kernel}). Since the operator-theoretic approach is general, it could also be further exploited to derive theoretical stability results and associated numerical methods for quasiperiodic tori and chaotic attractors. In the same context, one could consider the case of non hyperbolic attractors in detail and establish connections to center manifold theory. Appropriate numerical methods should also be developed in this case. We also believe that the theoretical stability results could be directly extended to discrete-time maps, but the associated numerical methods might be different since they should rely on an (algebraic) eigenvalue equation.} Finally, using the operator-theoretic framework in the context of input-output systems is still an open problem.

\appendix

\subsection{Operations on one-dimensional Bernstein polynomials (see also \cite{Bernstein_poly})}
\label{appendix1}

Suppose that $x\in \mathbb{R}$ and denote by $b^{s}(x)$ the $(s+1)$-dimensional vector of Bernstein polynomials of degree $s$
\begin{equation*}
b^{s}_{j+1}(x)=\binom{s}{j} \, x^{j} \, (1-x)^{s-j} \qquad j=0,\dots,s\,.
\end{equation*}
Assume that the polynomial $p(x)$ can be expressed as the product $p(x)=P^T \, b^{s}(x)$, where $P$ is a $(s+1)$-dimensional vector. For the polynomial $q(x)$, we have similarly $q(x)=Q^T \, b^{s'}(x)$, where $Q$ is a $(s'+1)$-dimensional vector.

\paragraph*{Differentiation} The differentiation of $p$ satisfies
\begin{equation*}
\frac{d p}{dx} = \left(D^s \, P \right)^T \, b^{s}(x)\,, 
\end{equation*}
where $D^s$ is a $(s+1) \times (s+1)$ matrix with the entries
\begin{equation}
\label{coeff_D}
D^s_{ij}=\begin{cases}
s-i+1 & \textrm{if } i=j-1 \\
-s+2(i-1) & \textrm{if } i=j \\
-i+1 & \textrm{if } i=j+1 \\
0 & \textrm{otherwise}
\end{cases}\,.
\end{equation}

\paragraph*{Multiplication} The multiplication of $p$ by $q$ satisfies
\begin{equation*}
q(x)\,p(x)= \left(M\, P \right)^T \, b^{s+s'}(x) 
\end{equation*}
where $M$ is a $(s+s'+1) \times (s+1)$ matrix with the entries
\begin{equation}
\label{coeff_M}
M_{ij}=\begin{cases}
Q_{i-j+1}\, \binom{s}{j-1}  \binom{s'}{i-j}/\binom{s+s'}{i-1} & \textrm{if } j\in[\max(1,i-s'),\min(s+1,i)] \\
0 & \textrm{otherwise}
\end{cases}\,.
\end{equation}

\paragraph*{Degree raising} Consider an integer $r>0$. We have
\begin{equation*}
p(x)=\left(T^{s,r}\,P \right)^T \, b^{s+r}(x)
\end{equation*}
where $T^{s,r}$ is a $(s+r+1)\times (s+1)$ matrix with the entries
\begin{equation}
\label{coeff_T}
T^{s,r}_{ij}=\begin{cases}
\binom{s}{j-1}  \binom{r}{i-j}/\binom{s+r}{i-1} & \textrm{if } i\in[j,j+r] \\
0 & \textrm{otherwise}
\end{cases}\,.
\end{equation}

\subsection{Operations on multivariate Bernstein polynomials}
\label{appendix2}

Suppose that $x\in \mathbb{R}^N$ and denote by $B^{s}(x)$ the vector of Bernstein polynomials (of degree $s$ in each variable $x_i$, $i=1,\dots,N$), i.e. $B^{s}(x)=b^{s}(x_1) \otimes \cdots \otimes b^{s}(x_N)$. Assume that the multivariate polynomial $p(x)$ can be expressed as the product $p(x)=P^T \, B^{s}(x)$, where $P$ is a vector. Similarly, we have
\begin{equation}
\label{product_q}
q(x)=Q^T \, B^{s'}(x)\,.
\end{equation} 

\paragraph*{Differentiation} The partial derivatives of $p$ are given by
\begin{equation}
\label{coeff_D_ndim}
\frac{\partial p}{\partial x_l} = \left(\bar{D}^{s,l} \, P \right)^T \, B^{s}(x) \,, 
\end{equation}
with
\begin{equation*}
\bar{D}^{s,l}= \overbrace{I^{s+1} \otimes \cdots \otimes I^{s+1}}^{l-1 \textrm{ times}} \otimes D^{s} \otimes \overbrace{I^{s+1} \otimes \cdots \otimes I^{s+1}}^{N-l \textrm{ times}} \,,
\end{equation*}
where $I^{s+1}$ is the $(s+1) \times (s+1)$ identity matrix and $D^{s}$ is given by \eqref{coeff_D}.

\paragraph*{Multiplication} The multiplication of $p$ by $q$ satisfies
\begin{equation*}
q(x)\,p(x)= \left(\bar{M} \, P \right)^T \, B^{s+s'}(x)
\end{equation*}
with
\begin{equation}
\label{M_ndim}
\bar{M}=\sum_{k_1=0}^{s'} \cdots \sum_{k_n=0}^{s'} q^{(k_1,\dots,k_n)} M^{k_1} \otimes \cdots \otimes M^{k_n}
\end{equation}
where $q^{(k_1,\dots,k_n)}$ is the component of $Q$ which multiplies $b^{s'}_{k_1+1}(x_1)\dots b^{s'}_{k_n+1}(x_n)$ in \eqref{product_q} and where $M^{k_l}$ is a $(s+s'+1) \times (s+1)$ matrix with entries
\begin{equation}
\label{coeff_M_ndim}
M^{k_l}_{ij}=\begin{cases}
\binom{s}{j-1}  \binom{s'}{k_l}/\binom{s+s'}{i-1} & \textrm{if } j=i-k_l \\
0 & \textrm{otherwise}
\end{cases}\,.
\end{equation}
This corresponds to the multiplication matrix \eqref{coeff_M} with $Q_i=1$ for $i=k_l+1$ and $Q_i=0$ otherwise (i.e. multiplication by the Bernstein polynomial $q(x)=b^s_{k_l+1}(x)$).

\paragraph*{Degree raising} Consider the integer $r>0$. We have
\begin{equation*}
p(x)=\left(\bar{T}^{s,r} \,P \right)^T \, B^{s+r,\dots,s+r}(x)
\end{equation*}
with 
\begin{equation}
\label{coeff_T_ndim}
\bar{T}^{s,r} =  \overbrace{T^{s,r} \otimes \cdots \otimes T^{s,r}}^{n \textrm{ times}}\,,
\end{equation}
where $T^{s,r}$ is given by \eqref{coeff_T}.

\subsection{Dynamics in polar-type coordinates}
\label{appendix3}

Considering the relationship $x=x^\gamma(\theta) + (y + \Delta) e_r(\theta)$, we have
\begin{equation*}
F(x)=\dot{x}= \frac{d x^\gamma}{d\theta} \, \dot{\theta} + e_r \, \dot{y} + (y+\Delta) \, e_\theta \, \dot{\theta} \,,
\end{equation*}
where $e_\theta=d e_r/d\theta$. Since $e_r \cdot e_\theta=0$ and $\|e_\theta\|=\|e_r\|$, it follows that
\begin{eqnarray*}
F(x) \cdot e_\theta & = & \left(\frac{dx^\gamma}{d\theta} \cdot e_\theta \right) \dot{\theta} + (y+\Delta) \, \|e_r\|^2 \, \dot{\theta} \,. \\
F(x) \cdot e_r & = & \left(\frac{dx^\gamma}{d\theta} \cdot e_r \right) \dot{\theta} + \|e_r\|^2 \, \dot{y}\,.
\end{eqnarray*}
Equivalently, we have
\begin{equation}
\label{F_theta}
\dot{\theta} = \frac{F(x) \cdot \left(e_\theta/\|e_r\|\right)}{\|x_\gamma(\theta)\|+(y+\Delta)\, \|e_r\|} \triangleq F_\theta(\theta,y)
\end{equation}
where we used $\frac{dx^\gamma}{d\theta} = \frac{d(\|x^\gamma\| e_r/\|e_r\|)}{d\theta} = \frac{d\|x^\gamma\|}{d\theta} \, e_r/\|e_r\| + \|x^\gamma\|\, e_\theta/\|e_r\|$, and we have
\begin{equation}
\label{F_y}
\dot{y} = \left( F(x)- F_\theta(\theta,y) \frac{d x^\gamma}{d\theta} \right) \cdot \left(e_r/\|e_r\|^2\right)\triangleq F_y(\theta,y)\,.
\end{equation}

\bibliographystyle{plain}

\end{document}